\definecolor{TUBred}{RGB}{196,13,32}    % TU Berlin red
\active\gdef@{\mkern1mu}}
\DeclareMathOperator{\cdim}{cdim}
\DeclareMathOperator{\mdim}{mdim}
\DeclareMathOperator{\blocks}{\text{$b$}}
\theoremstyle{definition}
\newtheorem{theorem}{Theorem}
\newtheorem{definition}[theorem]{Definition}
\newtheorem{lemma}[theorem]{Lemma}
\newtheorem{example}[theorem]{Example}
\newtheorem{corollary}[theorem]{Corollary}
\newtheorem{problem}{Open Problem}
\definecolor{navy}{RGB}{38, 70, 83}
\definecolor{cyan}{RGB}{65, 138, 127}
\definecolor{gold}{RGB}{233, 196, 106}
\definecolor{apri}{RGB}{244, 162, 97}
\definecolor{clay}{RGB}{231, 111, 81}
\tikzstyle{labeledvertex} = [draw=black, thick, minimum size=4.5mm, inner sep=0pt, text centered, align=center, circle]
\tikzstyle{vertex} = [draw=black, fill=black, inner sep=0.75mm, circle]
\tikzstyle{edge} = [black, thick]
\newlength{\negph@wd}
\DeclareRobustCommand{\negphantom}[1]{%
  \ifmmode
    \mathpalette\negph@math{#1}%
  \else
    \negph@do{#1}%
  \fi
}
\newcommand{\negph@math}[2]{\negph@do{$\m@th#1#2$}}
\newcommand{\negph@do}[1]{%
  \settowidth{\negph@wd}{#1}%
  \hspace*{-\negph@wd}%
}
\title{\Large The connectivity dimension of a graph}
\author{\normalsize Kurt Klement Gottwald\textsuperscript{1}, Tobias Hofmann\textsuperscript{2,}*}
\date{}
\affil{\footnotesize\textsuperscript{1}TU Chemnitz, \textsuperscript{2}TU Berlin\\
*Corresponding Author, \texttt{tobias.hfm@icloud.com}}
\begin{document}
\maketitle

\begin{abstract}
\noindent
\textbf{Abstract.} This article investigates the connectivity dimension of a graph. 
We introduce this concept in analogy to the metric dimension of a graph, providing a graph parameter that measures the heterogeneity of the connectivity structure of a graph. 
We fully characterize extremal examples and present explicit constructions of infinitely many graphs realizing any prescribed non-extremal connectivity dimension.
We also establish a general lower bound in terms of the graph's block structure, linking the parameter to classical notions from graph theory.
Finally, we prove that the problem of computing the connectivity dimension is NP-complete. \\

\noindent
\textbf{Keywords.} connectivity dimension, resolvability, local connectivity, threshold graphs \\

\noindent
\textbf{MSC.} 05C40, 05C42, 05C10

\end{abstract}

\section{Introduction}\label{sec:intro}

Identification problems in graphs concern methods to locate vertices uniquely based on structural information, such as distances to selected vertices or neighborhood profiles. Problems of this kind arise in applications ranging from network discovery to robot navigation, as surveyed by Tillquist, Frongillo, and Lladser~\cite{tillquist2023getting}. A classical setting is that of the metric dimension, introduced independently by Slater~\cite{slater1975leaves} and Harary and Melter~\cite{harary1976metric}. There, one asks for the smallest set of vertices that uniquely identifies all others by the ordered set of distances to the selected vertices. The motivation for that setting is to localize an object that can query the distances to landmarks to be placed in a network. Now suppose the object cannot query how far it is apart from landmarks, but how strong it is connected. This gives rise to the connectivity dimension\,---\,the graph invariant that we introduce in this article.

Whereas we refer to the monograph of Diestel~\cite{diestel2017graph} for basic graph theoretical terminology, let us proceed with concepts and notions that are of particular interest in what follows.
In~this article, graphs are nonempty, finite, simple, and undirected.
For two vertices~$v$ and~$w$ of a graph~$G$, we refer by~$\kappa(v,w)$ to the number of independent, also known as internally vertex disjoint, paths in $G$.
Hereby, we set ${{\kappa(v,v) \coloneqq \infty}}$ for all vertices~$v\in V(G)$. 
Given an ordered subset ${{W=\{w_1,\ldots,w_k\}}}$ of vertices of $G$, the \emph{connectivity representation} of a vertex ${{v\in V(G)}}$ is the vector ${{r_G(v,w)\coloneqq[\kappa(v,w_1),\ldots,\kappa(v,w_k)]}}$.
We sometimes refer to the vertices of $W$ as \emph{landmarks} and may omit the graph $G$ in the index of a connectivity representation if the context allows.
We say a vertex $w$ \emph{distinguishes} a set of vertices $U$ if $\kappa(v_1,w)\neq\kappa(v_2,w)$ for all pairs of vertices $v_1$ and $v_2$ in~$U$.
We call the set $W$ \emph{resolving} for the graph~$G$ if every pair of vertices is distinguished by some vertex~$w\in W$. Equivalently, $W$ is resolving if ${{r(v_1,W)=r(v_2,W)}}$ implies ${{v_1=v_2}}$ for all pairs of vertices $v_1$ and $v_2$ in~$V(G)$.
Note that while the order of entries in a connectivity representation matters, the resolving property depends only on the set itself.
Accordingly, we omit specifying orderings in many illustrations and arguments.
A resolving set of minimum cardinality is called a \emph{basis} of $G$ and we define the \emph{connectivity dimension} $\cdim(G)$ of $G$ as the cardinality of a basis of $G$. 
The metric dimension~$\mdim(G)$ is defined analogously. 
The only difference is that it involves distances~$d(v,w)$ instead of local connectivities~$\kappa(v,w)$ as entries of representation vectors.
Remarkably, despite this conceptual correspondence, the ratio between connectivity and metric dimension can be made arbitrarily small, as shown in \cref{cor:metricratio1}. In the localization setting described earlier, this means that connectivity representations may yield way more compact positional encodings. That said, the opposite effect can occur as well, as can be seen from \cref{cor:metricratio2}.

\begin{figure}
    \centering
    \begin{tikzpicture}[scale=1.15]
        \node[labeledvertex] (1) at (2.1,2.1) {\footnotesize{$v_1$}};
        \node[labeledvertex] (2) at (1.5,1.1) {\footnotesize{$v_2$}};
        \node[labeledvertex] (3) at (0.2,1.7) {\footnotesize{$v_3$}};
        \node[labeledvertex] (4) at (-0.9,1) {\footnotesize{$v_4$}};
        \node[labeledvertex] (5) at (0.1,-0.5) {\footnotesize{$v_5$}};
        \node[labeledvertex] (6) at (1.6,-0.2) {\footnotesize{$v_6$}};
        \node[labeledvertex] (7) at (3.2,-0.5) {\footnotesize{$v_7$}};
        \node[labeledvertex] (8) at (2.8,0.9) {\footnotesize{$v_8$}};

        \draw[edge] (1) -- (2);
        \draw[edge] (2) -- (3);
        \draw[edge] (2) -- (5);
        \draw[edge] (2) -- (6);
        \draw[edge] (3) -- (4);
        \draw[edge] (3) -- (5);
        \draw[edge] (3) -- (6);
        \draw[edge] (4) -- (5);
        \draw[edge] (5) -- (6);
        \draw[edge] (6) -- (7);
        \draw[edge] (6) -- (8);
        \draw[edge] (7) -- (8);
    \end{tikzpicture}
    \caption{A graph $G$ with $\cdim(G)=2$}
    \label{fig:introexample}
\end{figure}
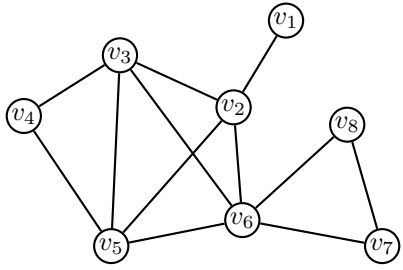

To get acquainted with the above notions, let us take a look at the graph~$G$ shown in \cref{fig:introexample}. 
The set ${{X\coloneqq\{v_1,v_4,v_8\}}}$ is not resolving, because ${{r(v_2,X) = [1,2,1] = r(v_3,X)}}$. 
One can verify that the set ${{Y\coloneqq\{v_2,v_5,v_7\}}}$ is resolving for the depicted graph. However, $Y$ is not of minimum cardinality, since ${{Z\coloneqq\{v_3,v_8\}}}$ is resolving as well. 
The corresponding connectivity representations are
\begin{align*}
    r(v_1,Z) &= [1,1],&&r(v_2,Z) = [3,1], \\
    r(v_3,Z) &= [\infty,1],&&r(v_4,Z) = [2,1], \\
    r(v_5,Z) &= [4,1],&&r(v_6,Z) = [3,2], \\
    r(v_7,Z) &= [1,2],&&r(v_8,Z) = [1,\infty].
\end{align*}
One can check that all vertex sets of cardinality one are not resolving for~$G$. 
More generally, the case ${{\cdim(G)=1}}$ is treated in \cref{thm:cdim_1}. 
Thus ${{\cdim(G)=2}}$. Note also that bases do not have to be unique. 
Further bases of $G$ are $\{v_3,v_7\}$, $\{v_5,v_7\}$, and $\{v_5,v_8\}$. 
Furthermore, for any graph~$G$ and any vertex set ${{W=\{w_1,\ldots,w_k\}\subseteq V(G)}}$, the entry in position~$k$ of the connectivity representation $r(w_k,W)$ is $\infty$ and $w_k$ is the only vertex in $V(G)$ having this entry at position $k$. 
In other words, the elements of $W$ itself are always distinguished and when verifying whether a given vertex set~$W$ is resolving for~$G$, one may focus just on~${{V(G)\setminus W}}$.

One interpretation of the connectivity dimension is as a measure of structural heterogeneity in graphs, as underlined by our results in Section~\ref{sec:prescribed}. 
Small connectivity dimension requires that connectivity varies heavily across the graph. In contrast, uniformly connected graphs have maximum connectivity dimension, as shown in \cref{thm:cdim_n-1}. Connectivity representations can also be seen as a way to enrich vertices with structural information or to encode vertices and their specific relations to others. 
Such representations are desirable, for instance, as input for machine learning models, as discussed by Hamilton, Ying, and Leskovec~\cite{hamilton2017representation}.

Whereas \cref{sec:prescribed} asks for graphs realizing a given connectivity dimension, in \cref{sec:blockstructure} we shift our perspective. 
Given a graph, what do we learn about its connectivity dimension from its classical graph theoretic properties?
As all local connectivities between vertices separated by an articulation of a connected graph are one, those vertices are a bottle neck for the information flow measured by the connectivity dimension.
This naturally motivates studying the relationship between the block structure of a graph and its connectivity dimension.
For the special but important case of bridges, we obtain the exact formula in \cref{cor:cdimofbridge}.
For general blocks, such precision cannot be expected.
However, in \cref{thm:cdimblocksratio}, we prove a general lower bound for the connectivity dimension solely depending on the number of blocks of a graph.

As Khuller, Raghavachari, and Rosenfeld~\cite{khuller1996landmarks} established for the metric dimension, we use a similar approach, in Section~\ref{sec:complexity}, to demonstrate that finding the connectivity dimension of an arbitrary graph is NP-complete. The authors of~\cite{khuller1996landmarks} also present a natural formulation of the metric dimension problem as a set cover problem, where each vertex corresponds to a subset covering all vertex pairs it distinguishes. This connection directly yields an $\mathcal{O}(\log n)$ approximation via the classical greedy algorithm, as shown by~Lov{\'a}sz~\cite{lovasz1975ratio}. Chartrand, Eroh, Johnson, and Oellermann~\cite{chartrand2000resolvability} further provide an integer programming formulation for computing the metric dimension, which can be easily adapted to the setting of the connectivity dimension. The approach by Sun~\cite{sun2023pp} to find metric representations by a position-aware graph neural network, achieving competitive computational results, could likewise be applied to determine connectivity representations.

\section{Graphs with prescribed connectivity dimension}\label{sec:prescribed}

First observe that $V(G)\setminus \{v\}$, where $v$ is an arbitrary vertex in $V(G)$, is resolving for any connected graph $G$. 
This yields the simple bounds
\begin{equation*}
    0\leq \cdim(G)\leq n-1,
\end{equation*}
where zero is only achieved if the graph consists of a single vertex. 
Before we show that the case $\cdim(G)=1$ is similarly restrictive,  we start by a technical lemma characterizing the behavior of the connectivity dimension upon disjoint union of graphs.

\begin{lemma}\label{lem:disjointunion}
    Let $G$ be a graph and let $G_1,\ldots,G_k$ be its connected components. 
    Let furthermore $G_1,\ldots,G_i$, $i\geq0$, be isolated vertices and $G_{i+1},\ldots,G_k$ be components of order at least two.
    Then there holds
    \begin{equation*}
        \cdim(G)=\max\{i-1,0\}+\sum_{j=i+1}^k\cdim(G_j).
    \end{equation*}
\end{lemma}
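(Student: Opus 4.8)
The plan is to understand how connectivity representations behave across connected components. The key observation is that $\kappa(v,w)$ measures internally vertex-disjoint paths, so if $v$ and $w$ lie in different connected components, there are no paths between them at all, giving $\kappa(v,w)=0$. Thus for a landmark $w$ in component $G_j$, every vertex outside $G_j$ receives entry $0$, while vertices inside $G_j$ receive positive (or $\infty$) entries. So a landmark can only distinguish pairs of vertices that it separates from each other based on finite connectivity values, and crucially it distinguishes \emph{within} its own component in the ordinary sense, while treating \emph{all} vertices of \emph{other} components identically (all zero).

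Let me work out the consequences. First I would isolate the role of the isolated vertices $G_1,\ldots,G_i$. Any isolated vertex $v$ has $\kappa(v,w)=0$ for every landmark $w\neq v$, so its entire representation (apart from the position where $v$ itself is the landmark) is the all-zero vector. Hence two distinct isolated vertices $v,v'$ have identical representations unless at least one of them is itself a landmark. Therefore, to resolve the $i$ isolated vertices, we must place landmarks on all but at most one of them: this forces at least $\max\{i-1,0\}$ landmarks among the isolated vertices, and placing exactly $i-1$ of them (leaving one out) suffices to distinguish them from each other and from everything else (the left-out isolated vertex is the unique vertex whose representation, restricted to the nonisolated landmarks, is all zero — provided it differs from the nontrivial components, which it does since those vertices have some positive entry toward an in-component landmark).

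Next I would handle the nontrivial components $G_{i+1},\ldots,G_k$. The central claim is that a landmark placed in $G_j$ contributes nothing toward distinguishing vertices lying in a different component $G_{j'}$, since all such vertices see entry $0$ from that landmark; conversely, adding landmarks from other components cannot help resolve pairs inside $G_j$, because those outside landmarks assign entry $0$ to every vertex of $G_j$ uniformly. This decoupling means a resolving set $W$ for $G$ restricts to a resolving set $W\cap V(G_j)$ for each $G_j$ (with the subtlety that $\kappa_G$ restricted to two vertices of the same component equals $\kappa_{G_j}$, since internally disjoint paths stay within the component). Combined with the isolated-vertex analysis, this gives the lower bound $\cdim(G)\ge \max\{i-1,0\}+\sum_{j=i+1}^k\cdim(G_j)$, and taking the union of component bases (plus $i-1$ isolated vertices) gives a matching resolving set, hence the upper bound.

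The main obstacle I anticipate is the bookkeeping at the interface between the isolated vertices and the nontrivial components, and ensuring the count is exactly $\max\{i-1,0\}$ rather than $i$. The delicate point is verifying that one isolated vertex may be left \emph{unlandmarked} without losing resolution: this works precisely because every vertex in a nontrivial component has a strictly positive connectivity to some landmark inside its own component, so no vertex of a nontrivial component can collide with the all-zero representation of the unlandmarked isolated vertex. I would also need to confirm that within a nontrivial component $G_j$, using $G_j$ as a standalone graph, the value $\kappa_{G_j}$ agrees with $\kappa_G$ for pairs in $G_j$, and that the self-entry $\infty$ convention behaves consistently — both of which follow directly from the definition since independent paths between two vertices of a component cannot leave that component.
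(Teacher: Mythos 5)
Your proposal is correct and follows exactly the route the paper intends: the paper's proof is the single sentence that the union of component bases together with $i-1$ of the isolated vertices yields a basis, and your argument supplies precisely the omitted verification (cross-component connectivities are zero, so a resolving set must restrict to a resolving set in each nontrivial component and must contain all but one isolated vertex, giving the matching lower bound). Your attention to the edge cases — the single unlandmarked isolated vertex colliding with nothing because every vertex of a nontrivial component has a positive entry toward an in-component landmark, and $\kappa_G$ agreeing with $\kappa_{G_j}$ on pairs within a component — is exactly the bookkeeping the paper's ``one checks'' leaves to the reader.
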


\begin{proof}
    One checks that the union of bases $B_j$ of $G_j$ together with the isolated vertices $G_1,\ldots,G_{i-1}$ yields a basis of $G$.
\end{proof}

\cref{lem:disjointunion} allows us to only consider connected graphs going forward.
The smallest possible connectivity dimension of graphs containing edges is one.
Let us show that there is only one connected graph that attains this.

\begin{theorem}\label{thm:cdim_1}
A connected graph $G$ has ${{\cdim(G)=1}}$ if and only if ${{G=K_2}}$.
\begin{proof}
Clearly, ${{\cdim(K_2)=1}}$. 
So we have to show that $K_2$ is the only connected graph with connectivity dimension one. 
Let us assume, for a contradiction, that there is a graph~$G$ on~$n\geq 3$ vertices having a resolving set of the form $W=\{w\}$ for some vertex~$w\in V(G)$. 
Furthermore, denote the vertices of~$G$ by~$w=v_1,v_2,\ldots,v_n$ such that $\kappa(v_2,w)\leq \ldots \leq \kappa(v_n,w)$. 
Since $\kappa(x,y)$ takes values only in $\{0,\ldots,n-1\}$, for any vertices~$x$ and~$y$, assuming that~$W$ is resolving for~$G$ means that
\begin{align*}
    r(v_1,W) &= [\kappa(v_1,w)] = [\infty],\\
    r(v_2,W) &= [\kappa(v_2,w)] = [1],\\
    &\hspace{2.2mm}\vdots\\
    r(v_n,W) &= [\kappa(v_n,w)] = [n-1].
\end{align*}
But $\kappa(v_n,w)=n-1$ says that~$v_n$ as well as~$w$ are adjacent to all other vertices. For~$n\geq 3$, we obtain two independent paths $v_2w$ and $v_2v_nw$, contradicting $\kappa(v_2,w) = 1$.
\end{proof}
\end{theorem}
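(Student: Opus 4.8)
The plan is to dispatch the easy direction immediately: since $K_2$ consists of two vertices joined by an edge, taking $W=\{w\}$ for either vertex $w$ yields $r(w,W)=[\infty]$ and representation $[1]$ for the other vertex, so $\{w\}$ is resolving and $\cdim(K_2)=1$. For the converse I would argue by contradiction, assuming a connected graph $G$ on $n\geq 3$ vertices admits a singleton resolving set $W=\{w\}$, and aim to expose an incompatibility between the values its single coordinate must take.

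The first key step is to pin down the admissible values of $\kappa(v,w)$. For distinct vertices of a connected graph these local connectivities lie in $\{1,\ldots,n-1\}$, while $\kappa(w,w)=\infty$. Since $W$ is resolving, the $n-1$ values $\kappa(v,w)$ with $v\neq w$ must be pairwise distinct; as there are exactly $n-1$ admissible values, a pigeonhole argument forces them to realize each of $1,2,\ldots,n-1$ precisely once. I would then order the vertices as $w=v_1,v_2,\ldots,v_n$ with $\kappa(v_j,w)=j-1$, singling out the two extremes $\kappa(v_2,w)=1$ and $\kappa(v_n,w)=n-1$.

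The decisive step exploits the maximal coordinate. Because $\kappa(v_n,w)\leq\min\{\deg v_n,\deg w\}$ and the largest possible degree in an $n$-vertex graph is $n-1$, the equality $\kappa(v_n,w)=n-1$ forces both $v_n$ and $w$ to be universal vertices, adjacent to every other vertex. I would then contradict $\kappa(v_2,w)=1$ directly: universality of $w$ supplies the path $v_2w$, while universality of $v_n$ supplies the internally disjoint path $v_2v_nw$, and these two paths are genuinely independent precisely because $n\geq 3$ guarantees $v_n\notin\{v_2,w\}$. Hence $\kappa(v_2,w)\geq 2$, the desired contradiction.

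I expect the main obstacle to be the structural inference in the final step\,---\,recognizing that the single extremal value $n-1$ rigidly determines the dense local structure around $w$, which is exactly what clashes with the forced presence of a vertex at connectivity $1$. Once that rigidity is identified, exhibiting the two independent paths and reading off the contradiction is routine.
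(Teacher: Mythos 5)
Your proposal is correct and follows essentially the same argument as the paper: a pigeonhole forcing the values $\kappa(v,w)$ to realize $1,\ldots,n-1$ exactly, then using $\kappa(v_n,w)=n-1$ to make $v_n$ and $w$ universal and exhibiting the two independent paths $v_2w$ and $v_2v_nw$ to contradict $\kappa(v_2,w)=1$. Your explicit remark that connectedness restricts the values to $\{1,\ldots,n-1\}$ even makes the pigeonhole step slightly cleaner than the paper's phrasing, which states the range as $\{0,\ldots,n-1\}$ and leaves the exclusion of $0$ implicit.
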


The other extreme case $\cdim(G)=n-1$ can also be neatly characterized. 
As the following theorem shows, graphs attaining that bound are precisely the \emph{uniformly connected} graphs, studied by Beineke, Oellermann, and Pippert~\cite{beineke2002average} and Göring and Hofmann~\cite{hofmann2022uniformly}. 
They can be defined as graphs on at least $k+1$ vertices where each pair of vertices is connected by $k$ independent paths, and no pair is connected by more than $k$ independent paths. 

\begin{theorem}\label{thm:cdim_n-1}
A connected graph $G$ on $n$ vertices has ${{\cdim(G)=n-1}}$ if and only if $G$ is uniformly $k$-connected.
\begin{proof}
If $G$ is not uniformly $k$-connected, then there exist vertices ${{v_1,v_2,w\in V(G)}}$ with ${{\kappa(v_1,w)\neq \kappa(v_2,w)}}$. So~${{W\coloneqq V(G)\setminus\{v_1,v_2\}}}$ is resolving for $G$ and thus ${{\cdim(G)\leq n-2}}$.

Now suppose that $G$ is uniformly $k$-connected and consider an arbitrary vertex set ${{W\subseteq V(G)}}$ with ${{|W|\leq n-2}}$.
Denoting two vertices in ${{V(G)\setminus W}}$ by $v_1$ and $v_2$, we obtain
\begin{equation*}
    r(v_1,W)= [k,\ldots,k] = r(v_2,W).
\end{equation*}
So~$W$ cannot be resolving for~$G$ and ${{\cdim(G) = n-1}}$.
\end{proof}
\end{theorem}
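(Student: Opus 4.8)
The plan is to treat the two implications of the biconditional separately, working throughout with the characterization of uniform $k$-connectivity as the condition that $\kappa(u,v)=k$ for every pair of distinct vertices $u,v$.

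For the direction that $\cdim(G)=n-1$ forces uniform $k$-connectivity I would argue contrapositively: assuming $G$ is not uniformly connected, I aim to produce a resolving set of size $n-2$. The crucial first step is to convert global non-uniformity into a discrepancy visible at a single landmark, i.e.\ to find distinct vertices $v_1,v_2$ and a third vertex $w\notin\{v_1,v_2\}$ with $\kappa(v_1,w)\neq\kappa(v_2,w)$. I would establish this by contradiction: if, for every vertex $w$, the values $\kappa(v,w)$ over all $v\neq w$ were equal to some constant $c(w)$, then the symmetry $\kappa(v,w)=\kappa(w,v)$ would give $c(v)=c(w)$ for all distinct $v,w$, so $c$ is globally constant and $G$ is uniformly connected, a contradiction. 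Having fixed such a triple, I set $W\coloneqq V(G)\setminus\{v_1,v_2\}$. Since $w\in W$ distinguishes $v_1$ and $v_2$, and since the remark following the introductory example reduces the check to pairs inside $V(G)\setminus W=\{v_1,v_2\}$, the set $W$ is resolving and hence $\cdim(G)\leq n-2$.

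For the converse, suppose $G$ is uniformly $k$-connected and let $W\subseteq V(G)$ satisfy $|W|\leq n-2$. Then $V(G)\setminus W$ contains two distinct vertices $v_1,v_2$, and every landmark $w\in W$ is different from both, so $\kappa(v_1,w)=k=\kappa(v_2,w)$. Consequently $r(v_1,W)=[k,\ldots,k]=r(v_2,W)$, so $W$ does not distinguish $v_1$ from $v_2$ and is not resolving. As every such $W$ fails, and as $\cdim(G)\leq n-1$ holds for all connected graphs, we conclude $\cdim(G)=n-1$.

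The only step that is more than bookkeeping is the symmetry argument in the first direction, which upgrades the purely global statement that not all pairs share the same connectivity into the local statement that some landmark sees two distinct connectivity values; this is exactly where the symmetry of $\kappa$ is needed. Everything else reduces to the standard fact that each landmark automatically resolves itself, so that only pairs in $V(G)\setminus W$ need to be inspected, together with the a priori bound $\cdim(G)\leq n-1$.
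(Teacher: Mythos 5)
Your proposal is correct and follows essentially the same route as the paper: exhibit a distinguishing landmark $w$ for some pair $v_1,v_2$ when $G$ is not uniformly connected (so $V(G)\setminus\{v_1,v_2\}$ resolves), and in the uniform case observe that any $W$ with $|W|\leq n-2$ leaves two vertices with identical representation $[k,\ldots,k]$. The only addition is your explicit symmetry argument justifying the existence of the triple $v_1,v_2,w$ with $\kappa(v_1,w)\neq\kappa(v_2,w)$, a step the paper asserts without proof; this is a worthwhile bit of rigor but not a different approach.
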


This characterization shows that maximum connectivity dimension reflects high structural homogeneity. 
Another lower bound on the connectivity dimension can be given if a graph's maximum degree $\Delta$ is bounded. Since local connectivities cannot exceed $\Delta$, this yields a bound on the entries of connectivity representations and thus the number of vertices that can be distinguished by them.

\begin{theorem}\label{thm:cdimDelta}
The connectivity dimension of a graph $G$ on $n$ vertices with maximum degree $\Delta\geq 2$ satisfies
\begin{equation*}
    \log_\Delta \Big(\frac{n+1}{2}\Big)\leq\cdim(G).
\end{equation*}
\begin{proof}
If $\Delta$ is the maximum degree of $G$, then ${{\kappa(v,w)\leq\min\{\deg(v),\deg(w)\}\leq \Delta}}$. 
A connectivity basis $B$ of $G$ contains $\cdim(G)$ many elements. 
This allows for at most $\Delta^{\cdim(G)}$ connectivity representations $r(v,B)$ for vectors ${{v\notin B}}$. 
Since the vertices in $B$ can be distinguished from each other and from those not in $B$, we can distinguish at most $\cdim(G) + \Delta^{\cdim(G)}$ vertices. This quantity must be larger than $n$ for $B$ to be resolving. We obtain
\begin{equation}\label{eq:transcendental}
    n\leq \cdim(G) + \Delta^{\cdim(G)}.
\end{equation}
Using the inequality $x+1\leq \Delta^x$ for $\Delta\geq 2$ and $x\geq 1$, we derive $n\leq 2\Delta^{\cdim(G)}-1$ and thus
\begin{equation*}
    \log_\Delta \Big(\frac{n+1}{2}\Big)\leq \cdim(G). \qedhere
\end{equation*}
\end{proof}
\end{theorem}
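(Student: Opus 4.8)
The plan is to combine a degree-based ceiling on local connectivities with a pigeonhole count on how many distinct connectivity representations a small landmark set can produce. By the remark following \cref{lem:disjointunion} I may assume $G$ is connected. The key structural input is that every coordinate of a connectivity representation is small: for distinct vertices $v$ and $w$, any family of internally vertex-disjoint paths between $v$ and $w$ must leave $v$ through pairwise distinct incident edges, so $\kappa(v,w)\le\min\{\deg(v),\deg(w)\}\le\Delta$, while connectivity of $G$ gives $\kappa(v,w)\ge 1$. Fixing a basis $B$ with $\lvert B\rvert=\cdim(G)=:k$, it follows that for each $v\notin B$ the vector $r(v,B)$ has its $k$ entries in $\{1,\dots,\Delta\}$. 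I would also note that $\Delta\ge 2$ forces $n\ge 3$, so $G\neq K_2$ and hence $k\ge 1$ by \cref{thm:cdim_1}; this will be needed at the very end.

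The count is then immediate. There are only $\Delta^{k}$ vectors in $\{1,\dots,\Delta\}^{k}$, so at most $\Delta^{k}$ vertices outside $B$ can receive pairwise distinct representations. Adding the $k$ landmarks themselves---which, as observed after \cref{fig:introexample}, are automatically distinguished from one another and from every other vertex---shows that a resolving set of size $k$ can separate at most $k+\Delta^{k}$ vertices. Since $B$ is resolving it must separate all of $V(G)$, giving the implicit bound
\begin{equation*}
  n\le\cdim(G)+\Delta^{\cdim(G)}.
\end{equation*}

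It remains to convert this into the stated closed form. As $k+\Delta^{k}$ cannot be inverted for $k$ in closed form, the idea is to absorb the linear term using the elementary estimate $x+1\le\Delta^{x}$, which holds for $\Delta\ge 2$ and $x\ge 1$. Taking $x=k$ gives $k\le\Delta^{k}-1$, hence $n\le 2\Delta^{k}-1$, i.e. $(n+1)/2\le\Delta^{k}$, and applying $\log_\Delta$ yields the claim
\begin{equation*}
  \log_\Delta\Big(\frac{n+1}{2}\Big)\le\cdim(G).
\end{equation*}
I expect the only subtle point to be this last step: one has to verify the auxiliary inequality over its full range and confirm that its hypothesis $x\ge 1$ actually applies, which is precisely where the earlier observation $k\ge 1$ enters, since $x+1\le\Delta^{x}$ fails for $x$ near $0$.
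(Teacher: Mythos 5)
Your proof is correct and follows essentially the same route as the paper's: the degree bound $\kappa(v,w)\le\Delta$, a pigeonhole count giving $n\le\cdim(G)+\Delta^{\cdim(G)}$, and the elementary inequality $x+1\le\Delta^x$ to reach the closed form. You are in fact slightly more careful than the paper, which leaves implicit both the connectivity assumption (needed so that representation entries lie in $\{1,\ldots,\Delta\}$ rather than $\{0,\ldots,\Delta\}$) and the verification that $\cdim(G)\ge 1$ justifies applying the auxiliary inequality.
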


Note that although this lower bound may be a rough estimate in general, it can be useful if~$n$ is large and~$\Delta$ is relatively small, that is, when $G$ is a large sparse graph. 
Further analytical improvements are possible.
For example, as discussed in the proof of \cref{thm:cdim_1}, no connectivity representation can contain the entries $1$ and $n-1$ at the same time.
However, solving \cref{eq:transcendental} without further simplifications requires numerical methods.

Having examined the range of possible dimension values, a key question remains:
does there exist a graph with connectivity dimension $k$ for every $0\leq k\leq n-1$?
And for $2\leq k\leq n-1$, can we even specify infinite families of graphs that attain that value?
Our answers to those questions involve the class of threshold graphs, first introduced by Chv{\'a}tal and Hammer~\cite{chvatal1977aggregations}.

\begin{definition}
    A \emph{threshold graph} on $n$ vertices is a graph that can be constructed from the empty graph by successively adding vertices according to two rules encoded by a binary sequence $x_1,\ldots, x_n$, $x_i\in\{0,1\}$.
    In step $i$, if $x_i=0$, an isolated vertex is added, and if $x_i=1$, a dominating vertex is added.
\end{definition}

Recall that two vertices $v_1$ and $v_2$ of a graph $G$ are called \emph{twins} if they have the same neighborhood in $G\setminus\{v_1,v_2\}$. 
Note that no third vertex can distinguish a pair of twins. 
Let $w\in V(G)\setminus\{v_1,v_2\}$ be any other vertex and $\mathcal{P}$ be any collection of $\kappa(v_1,w)$ independent $v_1$-$w$-paths in $G$. 
Then $\{P\setminus\{v_1\}\cup\{v_2\}:P\in\mathcal{P}\}$ is a collection of $\kappa(v_1,w)$ independent $v_2$-$w$-paths in $G$. 
It follows that $\kappa(v_2,w)\geq\kappa(v_1,w)$ and by symmetry, equality holds. 
Consequently, for every pair of twins in a graph, every connectivity basis has to contain at least one of them.

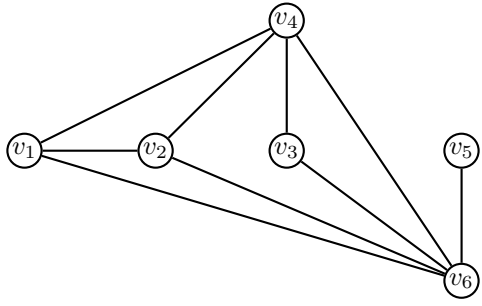
\begin{figure}
\centering
\begin{tikzpicture}[scale=1.15]

\node[labeledvertex] (1) at (0,0) {\footnotesize{$v_1$}};
\node[labeledvertex] (2) at (1.5,0) {\footnotesize{$v_2$}};
\node[labeledvertex] (3) at (3,0) {\footnotesize{$v_3$}};
\node[labeledvertex] (4) at (3,1.5) {\footnotesize{$v_4$}};
\node[labeledvertex] (5) at (5,0) {\footnotesize{$v_5$}};
\node[labeledvertex] (6) at (5,-1.5) {\footnotesize{$v_6$}};

\draw[edge] (2) to (1);
\draw[edge] (4) to (1);
\draw[edge] (4) to (2);
\draw[edge] (4) to (3);
\draw[edge] (6) to (1);
\draw[edge] (6) to (2);
\draw[edge] (6) to (3);
\draw[edge] (6) to (4);
\draw[edge] (6) to (5);

\end{tikzpicture}
\caption{A threshold graph generated by the sequence $0,1,0,1,0,1$.}
\label{fig:threshold}
\end{figure}
Take a look at \cref{fig:threshold} for an example of a threshold graph and its defining binary sequence.
In what follows, we denote such a sequence more compactly by $x_1^{k_1},x_2^{k_2},\ldots, x_m^{k_m}$, $x_i\in\{0,1\}$.
Herein, $x_i^{k_i}$ represents $k_i$ consecutive copies of $x_i$, for $i\in\{1,\ldots,m\}$, and we require $x_j\neq x_{j+1}$ for all $j\in\{1,\ldots,m-1\}$. Note that it is irrelevant whether the first entry of such a sequence is zero or one, as we add a singleton in both cases. So assuming $k_1\geq 2$ in sequences of length at least two is not a restriction.
Similarly, focusing on the case where $x_m=1$ is not truly a restriction, as this is necessary for a threshold graph to be connected.
We can infer the connectivity dimension of a disconnected graph from the connectivity dimensions of its connected components using \cref{lem:disjointunion}. 

\begin{theorem}\label{thm:cdimthreshold}
    Let $G$ be the threshold graph on $n$ vertices generated by the sequence $x_1^{k_1},x_2^{k_2},\ldots, x_m^{k_m}$, $x_i\in\{0,1\},x_m=1,k_1\geq 2$. Then
    \begin{equation*}
        \cdim(G)=\begin{cases}
            n-m   &\text{if }k_m > 1, \\
            n-m+1 &\text{if }k_m = 1.\vspace{-1ex}
        \end{cases}
    \end{equation*}
\end{theorem}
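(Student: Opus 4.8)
The plan is to reduce the whole problem to vertex degrees. The key structural fact I would establish first is the identity
\begin{equation*}
    \kappa(u,v)=\min\{\deg u,\deg v\}\qquad\text{for all }u\neq v
\end{equation*}
in a connected threshold graph. This follows from the nested neighbourhood structure: reading off the creation sequence, the $1$-blocks form a clique, the $0$-blocks form an independent set, and a vertex of a $0$-block is adjacent to exactly the clique vertices created after it, so for any two vertices the smaller open neighbourhood is contained in the closed neighbourhood of the larger. Assuming $\deg u\le\deg v$, one then routes $\deg u$ internally disjoint $u$-$v$-paths: the edge $uv$ if present, together with a length-two path $u$-$w$-$v$ through each remaining neighbour $w$ of $u$, each such $w$ being a neighbour of $v$ by nestedness. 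Since $\kappa(u,v)\le\min\{\deg u,\deg v\}$ trivially (disjoint paths leave $u$ through distinct neighbours), equality follows. I expect this identity to be the \textbf{main obstacle}; everything afterwards is bookkeeping on degrees.

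Two consequences I would record next. First, by the same nestedness two vertices are twins if and only if they have equal degree, and distinct blocks have distinct degrees; hence the $m$ blocks are exactly the degree classes, and the twin remark preceding the theorem already yields $\cdim(G)\ge n-m$, since a block of size $s$ forces at least $s-1$ of its vertices into every basis and $\sum_j(k_j-1)=n-m$. Second, using the identity, a landmark $w$ distinguishes two vertices $u,u'$ of distinct degrees precisely when $\deg w>\min\{\deg u,\deg u'\}$, whereas two vertices of equal degree can only be separated by placing one of them into $W$.

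With this dictionary I would reformulate resolvability. By the twin property the complement $U=V(G)\setminus W$ of any resolving set meets each block in at most one vertex; the degrees occurring in $U$ are therefore pairwise distinct, and $W$ is resolving if and only if for every two excluded vertices $u,u'\in U$ some landmark has degree exceeding $\min\{\deg u,\deg u'\}$. As the condition is most demanding for the two largest degrees present in $U$, it amounts to requiring a landmark of degree exceeding the \emph{second}-largest degree occurring in $U$. Here I would note that, because $x_m=1$ and the blocks alternate, the last block $B_m$ is the unique block of maximum degree $n-1$ (any earlier $1$-block misses the $0$-vertices created after it), while $k_m=1$ forces $m\ge2$ under the hypothesis $k_1\ge2$.

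Finally I would split into the two cases. If $k_m>1$, then excluding one representative from $B_m$ still leaves a landmark of degree $n-1$, so one may exclude a representative from every block: the resulting $W$ of size $n-m$ is resolving, and together with the lower bound this gives $\cdim(G)=n-m$. If $k_m=1$, excluding the single vertex of $B_m$ destroys all landmarks of degree $n-1$; consequently, if one excludes a representative from all $m$ blocks, the excluded vertices of $B_m$ and of the second-largest block are indistinguishable, as separating them would require a landmark of degree $n-1$ whose only carrier has been excluded. Hence at most $m-1$ blocks can contribute an excluded representative, so $\cdim(G)\ge n-m+1$; conversely, keeping the entire block $B_m$ and excluding one representative from each of the other $m-1$ blocks yields a resolving set of size $n-m+1$, whence $\cdim(G)=n-m+1$.
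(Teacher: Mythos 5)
Your proof is correct, and it takes a genuinely different route in the parts that matter. The paper shares your twin-based lower bound $\cdim(G)\geq n-m$, but for the upper bound it uses only the special case of your identity in which one endpoint dominates: for a landmark $w$ taken from the last block, $\kappa(v,w)=\deg(v)$, and explicitly computed degree formulas show that distinct blocks have distinct degrees, so the all-but-one-per-block set resolves when $k_m>1$. For $k_m=1$ the paper argues ad hoc: it treats the star case $m=2$ separately, and for $m>2$ it shows by a neighborhood-containment argument that $\kappa(v_m,w)=\kappa(v_{m-2},w)=\deg(w)$ for every third vertex $w$, so every basis must contain $v_m$ or all of block $m-2$, giving $n-m+1$ either way. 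You instead prove the full maximal local connectivity identity $\kappa(u,v)=\min\{\deg u,\deg v\}$ (which the paper asserts without proof only afterwards, in an example) and distill it into an exact criterion: $W$ is resolving if and only if the excluded set $U$ meets each degree class at most once and some landmark's degree exceeds the second-largest degree occurring in $U$. Both cases of the theorem then fall out uniformly, with no separate star case, and your lower-bound argument for $k_m=1$ is cleaner ($|U|\leq m-1$ directly, versus the paper's ``$v_m$ or all of block $m-2$'' dichotomy); as a bonus you characterize \emph{all} resolving sets of a connected threshold graph, not merely the minimum size, which the paper's argument does not yield. What the paper's route buys is brevity and less machinery. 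Two spots where you assert rather than prove, namely that distinct blocks have distinct degrees and that the degree order agrees with the neighborhood nesting, are true and easy (the paper's displayed degree formulas settle the former), but each deserves a line in a final write-up.
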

\begin{proof}
    Any two vertices corresponding to the same constant subsequence $x_i^{k_i}$ are twins. 
    Thus, any resolving set of $G$ has to contain at least all but one of the corresponding vertices.
    It follows that
    \begin{equation*}
        \cdim(G)\geq\sum_{i=1}^m(k_i-1)=\sum_{i=1}^mk_i-m=n-m.
    \end{equation*}

    For the converse inequality, let $W\subseteq V(G)$ for all $i\leq m$ contain all but one of the vertices corresponding to $x_i^{k_i}$, and let $k_m>1$. 
    Then $W$ is a set of size $n-m$. 
    We have to show that $W$ is a resolving set for $G$. 
    Consider any vertex $w$ corresponding to $x_m^{k_m}$ in $W$. 
    Let also $v\neq w$ be any vertex corresponding to $x_j^{k_j}$. 
    Because $w$ is a dominating vertex in $G$, we obtain\vspace{-1ex}
    \begin{align*}
        \kappa(v,w)&=\deg(v)=\!\sum_{i=j+1}^m\!\!x_i@k_i&&\hspace{-16mm}\text{if }x_j=0\quad\text{and}\\
        \kappa(v,w)&=\deg(v)=\sum_{i=0}^jk_i+\!\sum_{i=j+1}\!\!x_i@k_i&&\hspace{-16mm}\text{if }x_j=1.
    \end{align*}
    It follows that $\kappa(v_1,w)\neq\kappa(v_2,w)$ whenever $v_1$ and $v_2$ correspond to different constant subsequences.
    Thus, with $w\in W$ we can deduce which constant subsequence $x_i^{k_i}$ a vertex corresponds to, and since there is only one vertex corresponding to each such subsequence that is not an element of $W$, it follows that $W$ is resolving.
    
    It is now left to show that if $k_m=1$, we have $\cdim(G)=n-m+1$. Let first $m=2$, in which case $x_1=0$ and $x_2=1$. 
    Then $G$ is a star.
    In particular, it is uniformly \mbox{$1$-connected} and hence $\cdim(G)=n-1$. 
    Let now $m>2$ and denote the unique vertex corresponding to $x_m$ by $v_m$. Furthermore, let $v_{m-2}$ be any vertex corresponding to $x_{m-2}^{k_{m-2}}$ and let $w$ be any third vertex. 
    We already saw that $\kappa(v_m,w)=\deg(w)$. 
    The neighborhood of $w$ is contained in the neighborhood of $v_{m-2}$. 
    We thus also have $\kappa(v_{m-2},w)=\deg(w)$. 
    So $\kappa(v_m,w)=\kappa(v_{m-2},w)$ and thus no landmark other than $v_m$ itself can distinguish $v_m$ from the vertices corresponding to~$x_{m-2}^{k_{m-2}}$. 
    This means that any resolving set has to contain $v_m$ or every vertex corresponding to~$x_{m-2}^{k_{m-2}}$. 
    In any case, we get $\cdim(G)=n-m+1$.
\end{proof}

As argued in the beginning of the proof of \cref{thm:cdimthreshold}, just from knowing the count of twins of a graph, one gets the lower bound $\cdim(G)\geq n-m$. Herein, $m$ is the largest size of a set of vertices that are pairwise not twins.
In this sense, threshold graphs minimize the connectivity dimension.

\begin{example}\label{ex:house}
\begin{figure}
        \centering
    \begin{tikzpicture}[scale=0.96]

\node[labeledvertex] (1) at (0,0) {};
\node at (0.7,0) {$[2,2]$};
\node[labeledvertex, cyan, fill=cyan!30!white] (2) at (-1,-1) {\textcolor{cyan!70!black}{$w_1$}};
\node[labeledvertex] (3) at (1,-1) {};
\node at (1.7,-1) {$[4,3]$};
\node[labeledvertex,cyan, fill=cyan!30!white] (4) at (-1,-3) {\textcolor{cyan!70!black}{$w_2$}};
\node[labeledvertex] (5) at (1,-3) {};
\node at (1.7,-3) {$[3,3]$};

\draw[edge] (1) to (2);
\draw[edge] (1) to (3);
\draw[edge] (2) to (3);
\draw[edge] (2) to (4);
\draw[edge] (2) to (5);
\draw[edge] (3) to (4);
\draw[edge] (3) to (5);
\draw[edge] (4) to (5);

\end{tikzpicture}
        \caption{The house graph with a connectivity basis $\{w_1,w_2\}$ and corresponding connectivity representations}
        \label{fig:house}
    \end{figure}
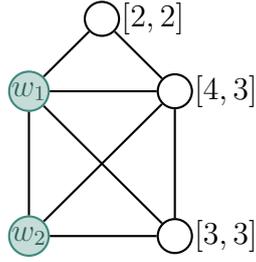
    Consider the threshold graph generated by the sequence $1,1,0,1,1$.
    We refer to it as the \emph{house graph}. 
    It contains two pairs of twins and so its connectivity dimension is at least two. 
    \cref{thm:cdimthreshold} provides the exact value, $n-m=5-3=2$.
    A possible connectivity basis is highlighted green in \cref{fig:house}.
\end{example}

\begin{example}
    It is not hard to see that threshold graphs are maximally locally connected, meaning that $\kappa(v,w)=\min\{\deg(v),\deg(w)\}$ for all pairs of vertices.
    The threshold graph encoded by sequence $x_1^{k_1},x_2^{k_2},\ldots, x_m^{k_m}$, $x_i\in\{0,1\},x_m=1,k_1\geq 2, k_m=1$, has therefore $m-1$ different local connectivities. 
    A naive extrapolation from this fact on the one hand, and \cref{thm:cdim_n-1} on the other extreme, might be that the connectivity dimension simply equals the order of a graph minus the number of distinct local connectivities.
    A graph of connectivity dimension~$n-2$ is depicted in \cref{fig:cdimn-2}. 
Beyond verifying its connectivity dimension by hand, one may recognize the graph as a union of two uniformly connected graphs joined by a bridge. 
Its dimension can be deduced from \cref{thm:cdim_n-1} anticipating the result of~\cref{cor:cdimofbridge}.
This example shows that the naive extrapolation is not valid. 
In the depicted case, $\cdim(G)=n-2$ despite that there are three distinct local connectivity values. 
So the connectivity dimension cannot be inferred solely by the count of local connectivity values that appear for a graph.
\begin{figure}
\centering
\begin{tikzpicture}[scale=1.35]

\node[labeledvertex] (1) at (-1,0) {\footnotesize{$v_1$}};
\node[labeledvertex] (2) at (0,1) {\footnotesize{$v_2$}};
\node[labeledvertex] (3) at (0,0) {\footnotesize{$v_3$}};
\node[labeledvertex] (4) at (0,-1) {\footnotesize{$v_4$}};
\node[labeledvertex] (5) at (1,0) {\footnotesize{$v_5$}};
\node[labeledvertex] (6) at (2.5,0) {\footnotesize{$v_6$}};
\node[labeledvertex] (7) at (3.5,1) {\footnotesize{$v_7$}};
\node[labeledvertex] (8) at (3.5,-1) {\footnotesize{$v_8$}};
\node[labeledvertex] (9) at (4.5,0) {\footnotesize{$v_9$}};

\draw[edge] (1) to (2);
\draw[edge] (1) to (3);
\draw[edge] (1) to (4);
\draw[edge] (2) to (3);
\draw[edge] (3) to (4);
\draw[edge] (2) to (5);
\draw[edge] (3) to (5);
\draw[edge] (4) to (5);
\draw[edge] (5) to (6);
\draw[edge] (6) to (7);
\draw[edge] (6) to (8);
\draw[edge] (7) to (9);
\draw[edge] (8) to (9);

\end{tikzpicture}
\caption{A graph of connectivity dimension $n-2$.}
\label{fig:cdimn-2}
\end{figure}
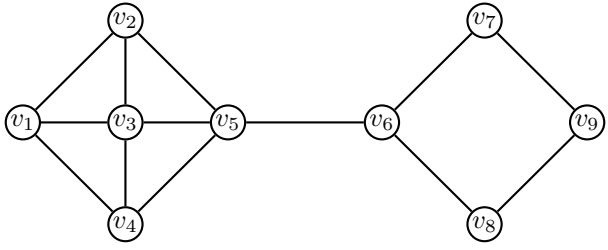
\end{example}

\cref{thm:cdimthreshold} now provides us with a method to construct infinite families of graphs having given connectivity dimension $k\geq 2$. Just take $n\in\mathbb{N}$ and choose $m$ such that $n-m+1=k$. For example, sequences of the form $1^2,0,1,\ldots,0,1$ yield graphs of connectivity dimension two. Sequences of the form $1^3,0,1,\ldots,0,1$ yield graphs of connectivity dimension three and so on.
Let us continue by investigating how the connectivity dimension behaves under taking subgraphs.
Clearly, if $G$ is a uniformly connected graph that is large enough and $H$ is a subgraph consisting just of a single vertex, then the ratio $\smash{\frac{\cdim(H)}{\cdim(G)}}$ can be made arbitrarily small.
The converse direction might not be that obvious.
\begin{corollary}\label{cor:subgraphratio}
    For every $\varepsilon>0$ there is a graph $G$ and an induced subgraph $H$ of $G$ such that
    \begin{equation*}
        \frac{\cdim(G)}{\cdim(H)}\leq\varepsilon.\vspace{-1ex}
    \end{equation*}
\end{corollary}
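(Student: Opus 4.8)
The plan is to produce an explicit infinite family of pairs $(G,H)$ in which $\cdim(G)$ stays bounded while $\cdim(H)$ grows without limit, so that the ratio $\cdim(G)/\cdim(H)$ can be driven below any prescribed $\varepsilon$. Concretely, I would keep $\cdim(G)=2$ fixed and let $\cdim(H)$ be an arbitrarily large parameter; the whole statement then reduces to a single limit computation.

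For the construction I would take $G$ to be the threshold graph generated by the sequence $1^2,0,1,0,1,\ldots,0,1$ consisting of the block $1^2$ followed by $s$ copies of the block $0,1$. As observed immediately after \cref{thm:cdimthreshold}, such graphs have connectivity dimension two: in the compressed notation this sequence has $m=2s+1$ blocks on $n=2s+2$ vertices with $k_m=1$, so \cref{thm:cdimthreshold} yields $\cdim(G)=n-m+1=2$.

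The key observation is that the $s+2$ vertices introduced by a $1$ in the defining sequence are pairwise adjacent: whenever a dominating vertex is added it is joined to every vertex already present, so any two such vertices are adjacent regardless of order of introduction. Hence the subgraph $H$ induced on these $s+2$ vertices is the complete graph $K_{s+2}$. Since $K_{s+2}$ is uniformly $(s+1)$-connected on $(s+1)+1$ vertices, \cref{thm:cdim_n-1} gives $\cdim(H)=s+1$.

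Combining the two dimensions, I would conclude that $\cdim(G)/\cdim(H)=2/(s+1)$, which tends to $0$ as $s\to\infty$; thus, given $\varepsilon>0$, choosing $s$ large enough that $2/(s+1)\le\varepsilon$ finishes the proof. The only genuine obstacle is conceptual rather than computational, namely spotting that a graph of minimal nontrivial connectivity dimension can nonetheless contain an arbitrarily large uniformly connected\,---\,hence maximum-dimension\,---\,induced subgraph. Once the clique $K_{s+2}$ inside the dimension-two threshold graph is identified, both dimensions follow directly from results already established, and no further estimation is required.
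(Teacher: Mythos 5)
Your proof is correct and takes essentially the same route as the paper: both use the alternating threshold graph of connectivity dimension two and exhibit the clique on its dominating vertices as a large induced subgraph, whose dimension is maximal. The only cosmetic differences are that you write the defining sequence as $1^2,0,1,\ldots,0,1$ rather than $0,1,0,1,\ldots,0,1$ (the same graph, since the first entry is irrelevant, with your clique one vertex larger) and that you compute $\cdim(H)$ via \cref{thm:cdim_n-1} instead of \cref{thm:cdimthreshold}.
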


\begin{proof}
    Choose $G$ to be the threshold graph defined by the sequence $0,1,0,1,\ldots,0,1$, of order $2n, n\geq 1$. 
    Deleting all vertices corresponding to zeros in that sequence leaves us with $H=K_n$ as an induced subgraph of $G$. According to \cref{thm:cdimthreshold}, we have $\cdim(G)=2$ and $\cdim(H)=n-1.$
\end{proof}
We may also draw comparisons to the metric dimension.
\begin{corollary}\label{cor:metricratio1}
    For every $\varepsilon>0$ there is a graph $G$ such that
\begin{equation*}
    \frac{\cdim(G)}{\mdim(G)}\leq\varepsilon.\vspace{-1ex}
\end{equation*}
\begin{proof}
    Take again $G$ to be the threshold graph defined by the sequence $0,1,0,1,\ldots,0,1$, of order $n\geq 3$. We have $\cdim(G)=2$. For its metric dimension, note that $G$ has a dominating vertex and thus any distance in $G$ is at most two. Assume we have a metric basis of size $m\geq 1$. Using this basis, we can thus distinguish at most $m+2^m$ vertices. This is because, other than the $m$ vertices from the basis, every vertex has a metric representation vector consisting only of ones and twos. There are $2^m$ such vectors and we obtain that $m+2^m\geq n$. In particular, $m\to\infty$ as $n\to\infty$.
\end{proof}
\end{corollary}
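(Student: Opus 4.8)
The plan is to pin down a single infinite family of graphs on which the connectivity dimension stays bounded while the metric dimension grows without bound, so that the ratio $\cdim(G)/\mdim(G)$ can be driven below any prescribed $\varepsilon$. Since \cref{cor:subgraphratio} already supplied threshold graphs of connectivity dimension exactly two, I would reuse that family: let $G$ be the threshold graph generated by the alternating sequence $0,1,0,1,\ldots,0,1$, for which \cref{thm:cdimthreshold} gives $\cdim(G)=2$ independently of the order $n$. It then suffices to show that $\mdim(G)\to\infty$ as $n\to\infty$; choosing $n$ large enough that $\mdim(G)\geq 2/\varepsilon$ yields $\cdim(G)/\mdim(G)=2/\mdim(G)\leq\varepsilon$.

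First I would record the structural feature that controls the metric side: the final vertex produced by the sequence is dominating, so every pair of non-adjacent vertices has a common neighbor and $G$ has diameter at most two. Hence every distance between two distinct vertices lies in $\{1,2\}$.

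Next comes the counting step that forces the metric dimension up. Fix a metric basis $W$ with $|W|=m$. For any vertex $v\notin W$, the metric representation $r(v,W)$ has all of its $m$ entries in $\{1,2\}$, so there are at most $2^m$ admissible representation vectors for vertices outside $W$. Together with the $m$ landmarks, whose representations each carry a zero entry, the basis can distinguish at most $m+2^m$ vertices. As $W$ resolves $G$, this forces $n\leq m+2^m$, so $m$ grows at least logarithmically in $n$ and in particular $\mdim(G)\to\infty$.

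There is no deep obstacle here; the proof is essentially a logarithmic capacity bound for metric codes. The step that most warrants care is the input from the previous results\,---\,confirming $\cdim(G)=2$ and the diameter-two property straight from the defining sequence\,---\,and the conceptual heart of the corollary is the contrast it exposes: bounded distances cap the number of distinguishable vertices per landmark at a constant, so metric resolution needs $\Omega(\log n)$ landmarks, whereas connectivity resolution of the very same graphs succeeds with only two.
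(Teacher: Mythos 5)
Your proposal is correct and follows essentially the same route as the paper's proof: the alternating threshold graph $0,1,0,1,\ldots,0,1$ with $\cdim(G)=2$ via \cref{thm:cdimthreshold}, combined with the diameter-two counting bound $n\leq m+2^m$ forcing $\mdim(G)\to\infty$. Your added explicit step of choosing $n$ so that $\mdim(G)\geq 2/\varepsilon$ merely spells out what the paper leaves implicit.
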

The converse is also true.
\begin{corollary}\label{cor:metricratio2}
    For every $\varepsilon>0$ there is a graph $G$ such that
\begin{equation*}
    \frac{\mdim(G)}{\cdim(G)}\leq\varepsilon.\vspace{-1ex}
\end{equation*}
\begin{proof}
   Consider the path graph~$P_n$. We have $\mdim(P_n)=1$. This can be seen by using one of its vertices of degree one as single landmark. On the other hand, $P_n$ is uniformly connected, which means that $\cdim(P_n)=n-1$.
\end{proof}
\end{corollary}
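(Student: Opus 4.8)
The plan is to exhibit one explicit family of graphs whose metric dimension stays bounded while its connectivity dimension grows without bound, so that the ratio of the two is forced to zero. The path graph $P_n$ is the natural candidate, since its two invariants sit at opposite ends of their respective ranges: a path is about as ``heterogeneous'' as possible for distances but perfectly uniform for local connectivity.

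First I would bound the metric dimension from above by a constant. Fixing one of the two degree-one endpoints $u$ of $P_n$ as the sole landmark, the distance $d(v,u)$ simply counts the edges between $v$ and $u$ along the path, so it runs through the distinct values $0,1,\ldots,n-1$ as $v$ ranges over all vertices. Thus $\{u\}$ already distinguishes every pair of vertices, giving $\mdim(P_n)=1$. Next I would pin down the connectivity dimension. Since $P_n$ is a tree, there is a unique path between any two of its vertices, whence $\kappa(v,w)=1$ for every pair of distinct vertices; that is, $P_n$ is uniformly $1$-connected. Invoking \cref{thm:cdim_n-1} then yields $\cdim(P_n)=n-1$.

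Combining the two computations gives
\begin{equation*}
    \frac{\mdim(P_n)}{\cdim(P_n)}=\frac{1}{n-1},
\end{equation*}
which falls below any prescribed $\varepsilon>0$ as soon as $n>1+1/\varepsilon$, completing the argument. There is no genuine obstacle in this corollary; the only points requiring a little care are the correct appeal to the uniform-connectivity characterization of \cref{thm:cdim_n-1} and the observation that it is precisely the unique-path property of trees that forces $\kappa$ to the constant value one across all vertex pairs.
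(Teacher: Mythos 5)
Your proposal is correct and follows exactly the same route as the paper: it uses the path graph $P_n$ with a degree-one endpoint as single landmark to get $\mdim(P_n)=1$, and uniform $1$-connectivity via \cref{thm:cdim_n-1} to get $\cdim(P_n)=n-1$. Your write-up merely spells out the details the paper leaves implicit (distinct distance values along the path, the unique-path property of trees, the explicit threshold $n>1+1/\varepsilon$), which is fine.
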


\section{The connectivity dimension and the block structure of a graph}\label{sec:blockstructure}

Let $G$ be a connected graph and $Q$ be a connected subgraph of $G$ that is connected to $G-V(Q)$ via a cut vertex of $G$. 
Given vertices $v,w\in V(Q)$, we find that no path from $v$ to $w$ can leave $Q$ since it would have to reenter it eventually, and for this it would have to use the same cut vertex again.
Consequently, for all $W\subseteq V(G)$ it follows
\begin{equation}\label{eq:rofblock}
    r_Q(v,W\cap V(Q))=r_G(v,W\cap V(Q))=r_G(v,W)|_{V(Q)}.
\end{equation}
In words, $r_Q(v,W\cap V(Q))$ is the restriction of $r_G(v,W)$ to $Q$.

\begin{figure}
\centering
\begin{tikzpicture}[scale=1.1]

    \node at (2.8,2.1) {\textcolor{navy}{$B_1$}};
    \node at (-0.3,0.8) {\textcolor{navy}{$B_2$}};
    \node at (2.74,-0.1) {\textcolor{navy}{$B_3$}};
    
    \draw[thick, fill=none, draw=navy, rounded corners=2.5mm, densely dashed] (1.9,1.4) -- (0.16,2.18) -- (-1.43,1.1) -- (-0.1,-0.95) -- (2,-0.5) -- cycle;

    \draw[thick, fill=none, draw=navy, rounded corners=4mm, densely dashed] (0.95,0.93) -- (1.96,2.61) -- (2.62,2.3) -- (1.65,0.58) -- cycle;

    \begin{scope}[scale=1.8, shift={(-1.135,-0.031)}]
    \draw[thick, fill=none, draw=navy, rounded corners=6mm, densely dashed] (1.6,-0.2) -- (3.2,-0.5) -- (2.8,0.9) -- cycle;
    \end{scope}
    
    \node[labeledvertex] (1) at (2.1,2.1) {\footnotesize{$v_1$}};
    \node[labeledvertex] (2) at (1.5,1.1) {\footnotesize{$v_2$}};
    \node[labeledvertex] (3) at (0.2,1.7) {\footnotesize{$v_3$}};
    \node[labeledvertex] (4) at (-0.9,1) {\footnotesize{$v_4$}};
    \node[labeledvertex] (5) at (0.1,-0.5) {\footnotesize{$v_5$}};
    \node[labeledvertex] (6) at (1.6,-0.2) {\footnotesize{$v_6$}};
    \node[labeledvertex] (7) at (3.2,-0.5) {\footnotesize{$v_7$}};
    \node[labeledvertex] (8) at (2.8,0.9) {\footnotesize{$v_8$}};

    \draw[edge] (1) -- (2);
    \draw[edge] (2) -- (3);
    \draw[edge] (2) -- (5);
    \draw[edge] (2) -- (6);
    \draw[edge] (3) -- (4);
    \draw[edge] (3) -- (5);
    \draw[edge] (3) -- (6);
    \draw[edge] (4) -- (5);
    \draw[edge] (5) -- (6);
    \draw[edge] (6) -- (7);
    \draw[edge] (6) -- (8);
    \draw[edge] (7) -- (8);

    \begin{scope}[shift={(5.5,0)}]
    \node[labeledvertex, navy, minimum size=6mm] (9) at (2.1,2.1) {\textcolor{navy}{\footnotesize{$B_1$}}};
    \node[labeledvertex] (10) at (1.5,1.1) {\footnotesize{$v_2$}};
    \node[labeledvertex, navy, minimum size=6mm] (11) at (0.15,0.6) {\textcolor{navy}{\footnotesize{$B_2$}}};
    \node[labeledvertex] (12) at (1.6,-0.2) {\footnotesize{$v_6$}};
    \node[labeledvertex, navy, minimum size=6mm] (13) at (3,0.2) {\textcolor{navy}{\footnotesize{$B_3$}}};
    \end{scope}
    
    \draw[edge] (9) -- (10);
    \draw[edge] (10) -- (11);
    \draw[edge] (11) -- (12);
    \draw[edge] (12) -- (13);
\end{tikzpicture}
    \caption{The graph from \cref{fig:introexample} and its block graph.}
    \label{fig:blockgraphexample}
\end{figure}
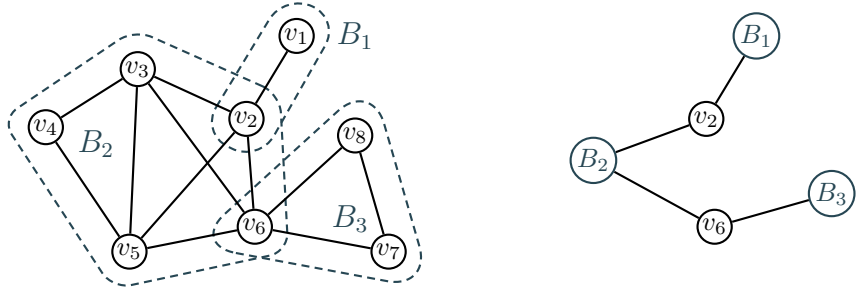

A \emph{block} of a graph~$G$ is a maximal connected subgraph of~$G$ that does not contain a cut vertex of that subgraph.  
Two different blocks can only intersect in at most one cut vertex of $G$. 
Note that \cref{eq:rofblock} is true whenever $Q$ is a block of $G$. 
Every block is either an isolated vertex, a bridge, or a maximal \mbox{$2$-connected} subgraph.
In view of \cref{lem:disjointunion}, we may disregard isolated vertices in what follows.
Denoting by $\mathcal{Q}$ the set of all blocks of $G$ and by $C$ the set of all cut vertices of $G$, the \emph{block graph} of $G$ is the bipartite graph on vertex set $\mathcal{Q}\cup C$ that has an edge between $v\in C$ and $Q\in\mathcal{Q}$ if and only if $v\in Q$.
An example is depicted in \cref{fig:blockgraphexample}.
We call all vertices of a block that are no cut vertices of $G$ \emph{inner} vertices of that block.
The number of blocks of a graph $G$ will be denoted by $\blocks(G)$. 
Since the blocks of connected graphs are always joined via cut vertices, the following lemma will be useful in what follows.

\begin{lemma}\label{lem:removeblock}
    Consider a graph $G$, two subgraphs $G_1$ and $G_2$ with $V(G_1)\cap V(G_2)=\{w\}$, and a connectivity basis $B$ of $G$. 
    If $w$ is a cut vertex of $G$, then $(B\cap V(G_1))\cup\{w\}$ is resolving for $G_1$.
\end{lemma}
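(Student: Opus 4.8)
The plan is to fix an arbitrary pair of distinct vertices $v_1,v_2\in V(G_1)$ and to exhibit, among the landmarks of $W\coloneqq(B\cap V(G_1))\cup\{w\}$, one that distinguishes them inside $G_1$. I work in the setting that precedes \eqref{eq:rofblock}, so that $V(G)=V(G_1)\cup V(G_2)$ and every landmark of $B$ lies either in $V(G_1)$ or in $V(G_2)\setminus\{w\}$. The two ingredients are \eqref{eq:rofblock}, which guarantees $\kappa_{G_1}(v,b)=\kappa_G(v,b)$ whenever $v,b\in V(G_1)$, and the basic fact noted after \cref{fig:introexample} that a landmark distinguishes itself from every other vertex through its $\infty$-entry.

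First I would dispose of the pairs containing $w$. If, say, $v_2=w$, then since $w\in W$ and $\kappa_{G_1}(w,w)=\infty$ exceeds the finite value $\kappa_{G_1}(v_1,w)$, the landmark $w$ already distinguishes the pair. Hence it suffices to treat pairs with $v_1,v_2\in V(G_1)\setminus\{w\}$.

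For such a pair I would use that $B$ resolves $G$ to pick a landmark $b\in B$ with $\kappa_G(v_1,b)\neq\kappa_G(v_2,b)$, and then argue $b\in V(G_1)$. Indeed, were $b\in V(G_2)\setminus\{w\}$, the cut vertex $w$ would separate each $v_i$ from $b$, so $\{w\}$ is a $v_i$-$b$ separator and Menger's theorem forces $\kappa_G(v_i,b)=1$ for both $i$, contradicting $\kappa_G(v_1,b)\neq\kappa_G(v_2,b)$. Therefore $b\in B\cap V(G_1)\subseteq W$, and \eqref{eq:rofblock} transports the inequality into $G_1$, giving $\kappa_{G_1}(v_1,b)\neq\kappa_{G_1}(v_2,b)$. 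This exhausts all cases and shows that $W$ is resolving for $G_1$.

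The step I expect to be the crux is ruling out a distinguishing landmark on the far side $G_2$: one must observe that all connectivities across the cut vertex collapse to $1$, so a landmark in $V(G_2)\setminus\{w\}$ can never separate two interior vertices of $G_1$ and is only ever useful for distinguishing $w$ itself. This is precisely what makes adjoining $w$ to $B\cap V(G_1)$ the right repair, and it is the only place where the cut-vertex hypothesis is genuinely used.
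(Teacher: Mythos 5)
Your proposal is correct and follows essentially the same route as the paper: the key observation in both is that the cut vertex $w$ collapses every connectivity $\kappa_G(v,b)$ to $1$ for $v\in V(G_1)\setminus\{w\}$ and $b\in V(G_2)\setminus\{w\}$, so no far-side landmark can distinguish two vertices of $G_1\setminus\{w\}$, forcing $B\cap V(G_1)$ to do so, while adjoining $w$ handles the pairs involving $w$ via its $\infty$-entry. You merely make explicit two steps the paper leaves implicit (the case $v_i=w$ and the transfer $\kappa_{G_1}=\kappa_G$ via \eqref{eq:rofblock}), which is a welcome but not substantively different elaboration.
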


\begin{proof}
    For any vertex $v_2\in V(G_2)\setminus \{w\}$, $\kappa_G(v_1,v_2)=1$ for all vertices $v_1\in V(G_1)\setminus \{w\}$, as $\{w\}$ separates $v_1$ and $v_2$ in $G$.
    In other words, no vertex in $V(G_2)\setminus \{w\}$ can distinguish any two vertices in $V(G_1)\setminus \{w\}$.
    Thus, for $B$ to be a connectivity basis of $G$, $B\cap V(G_1)$ has to distinguish any pair of vertices in $V(G_1)\setminus \{w\}$. So $(B\cap V(G_1))\cup\{w\}$ is indeed resolving for $G_1$.
\end{proof}

The first major goal of this section is to understand how the connectivity dimension behaves when joining two graphs via a bridge. 
This question is addressed by \cref{lem:cdimHjoin}, whose broader setting is illustrated in \cref{fig:Hjoin}. 
We rely on the following terminology to describe the details of our result. 
We say a graph $G$ \emph{is forcing a $\mathds{1}$ representation} if for every basis $B$ of $G$ there is a vertex $v$ of $G$ such that $r_G(v,B)=\mathds{1}=[1,\ldots,1]$.
Since $B$ is a basis, there can be at most one such vertex.
In general, $v$ depends on $B$.
Let for example $G$ be a tree. 
Then for any $v\in V(G)$ we have that $V(G)\setminus\{v\}$ is a basis and $r_G(v,V(G)\setminus\{v\})=\mathds{1}$.

\begin{lemma}\label{lem:cdimHjoin}
    Let $G_1,\ldots,G_k$ be connected graphs, on pairwise disjoint vertex sets, with at least two vertices each and let $\ell$ be the number graphs in $G_1,\ldots,G_k$ forcing a $\mathds{1}$ representation. 
    For $v_i\in G_i$, $i\in\{1,\ldots,k\}$, let $H$ be a connected graph on $V(H)=\{v_1,\ldots,v_k\}$. For the graph $G$ with $V(G)=\bigcup_{i=1}^k V(G_i)$ and $E(G)=E(H)@\cup\,\bigcup_{i=1}^kE(G_i)$, there holds
    \begin{equation*}
        \cdim(G)\leq\max\{\ell-1,0\}+\sum_{i=1}^k \cdim(G_i)
    \end{equation*}
    Herein, equality holds if $H$ is a tree.
\end{lemma}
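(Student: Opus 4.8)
The plan rests on the observation that, for $k\ge 2$, each $v_i$ is a cut vertex of $G$ separating $V(G_i)\setminus\{v_i\}$ from the rest. First I would record the resulting factorization of local connectivities: for $x,y\in V(G_i)$ one has $\kappa_G(x,y)=\kappa_{G_i}(x,y)$, since no internally disjoint family of $x$--$y$ paths can leave $G_i$ without reusing $v_i$; for an inner vertex $x$ of $G_i$ and any $w\notin V(G_i)$ one has $\kappa_G(x,w)=1$, because $\{v_i\}$ separates them; and the only cross-component connectivities that can exceed $1$ are the hub-to-hub ones, with $\kappa_G(v_i,v_j)=\kappa_H(v_i,v_j)$. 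In particular all cross-component connectivities equal $1$ precisely when $H$ is a tree. Writing $W_i:=B\cap V(G_i)$, this shows that $r_G(x,B)$ restricted to the $W_i$-coordinates equals $r_{G_i}(x,W_i)$ and is padded by ones elsewhere, the sole exception being hub-to-hub entries.

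For the upper bound, valid for any connected $H$, I would build a resolving set by gluing local bases. I choose for each $G_i$ a basis $W_i$; in every non-forcing component I select a basis exhibiting no all-ones vertex, while each forcing component contributes a unique vertex $u_i$ with $r_{G_i}(u_i,W_i)=\mathds{1}$. Set $B_0=\bigcup_i W_i$, of size $\sum_i\cdim(G_i)$, and let $U=\{u_i:G_i\text{ forcing}\}$, so $|U|=\ell$. The heart of the argument is to show that the only pairs $B_0$ leaves unresolved lie inside $U$: checking the inner/inner, inner/hub, and hub/hub cases with the factorization above, a cross-component coincidence forces the component blocks of both vertices to be all-ones, hence each vertex to be the forced all-ones vertex of its component. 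Granting this, adjoining any $\ell-1$ elements of $U$ to $B_0$ resolves $G$, since the adjoined vertices become self-distinguishing landmarks and the unique survivor is then the only all-ones vertex left. This yields $\cdim(G)\le\sum_i\cdim(G_i)+\max\{\ell-1,0\}$.

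For the matching lower bound I would use the tree hypothesis to upgrade \cref{lem:removeblock}. When $H$ is a tree every external landmark returns the value $1$ on every vertex of $G_i$ (now including hub-to-hub, as $\kappa_H(v_i,v_j)=1$), so no external landmark separates two vertices of $G_i$; hence each $W_i$ must already be resolving for $G_i$ and $|W_i|\ge\cdim(G_i)$. Moreover a vertex of $G_i$ has the all-ones representation in $G$ exactly when it is all-ones for $W_i$ in $G_i$, so the number of all-ones vertices of $G$ equals $\sum_i a_i$ with $a_i\in\{0,1\}$; since a resolving set admits at most one all-ones vertex, $\sum_i a_i\le 1$. The decisive interaction is that in a forcing component $a_i=0$ makes $W_i$ a resolving set without an all-ones vertex, hence not a basis, forcing $|W_i|\ge\cdim(G_i)+1$. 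As at most one component has $a_i=1$, at least $\ell-1$ of the forcing components overspend by one, giving $|B|\ge\sum_i\cdim(G_i)+\max\{\ell-1,0\}$ and hence equality.

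I expect the main obstacle to be the collision-confinement step of the upper bound when $H$ is not a tree. There a hub $v_i$ lying in $W_i$ can transmit a connectivity value $\kappa_H(v_i,v_j)>1$ into an inner vertex's block, so an inner vertex that is \emph{not} all-ones could in principle coincide with a hub forced all-ones vertex, escaping the set $U$. The natural remedy is to choose each local basis to avoid its hub $v_i$, which removes these spurious entries and confines all coincidences to $U$; justifying that a hub-avoiding basis of the required size always exists, or otherwise absorbing the forced hubs into the count, is the delicate point. Reassuringly, in the equality case $H$ is a tree, where $\kappa_H\equiv 1$ makes every such entry equal to $1$ and the complication disappears, which is exactly why the lower bound is comparatively routine once the tree hypothesis forces each $W_i$ to resolve its component.
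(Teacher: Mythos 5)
Your tree-case (equality) half is correct and is essentially the paper's own argument: when $H$ is a tree, every external landmark sees every vertex of $G_i$ with local connectivity one, so $W_i=B\cap V(G_i)$ must already resolve $G_i$; a component of $G_i$ forcing a $\mathds{1}$ representation whose trace $W_i$ admits no all-ones vertex cannot be a basis, hence overspends by one; and since at most one all-ones vertex can survive globally, at least $\ell-1$ forcing components overspend. This matches the paper step for step. The problem is the upper bound for general $H$. Your construction (local bases without spurious $\mathds{1}$-vertices, plus all but one forced vertex) is again the paper's, but your central ``confinement'' claim\,---\,that every pair left unresolved by $B_0$ lies inside $U$\,---\,is exactly the step you leave open, and your proposed remedy does not close it: nothing guarantees that $G_i$ possesses a basis of size $\cdim(G_i)$ avoiding its hub $v_i$, and ``absorbing the forced hubs into the count'' is not an argument. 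As written, the inequality for non-tree $H$ is therefore not proved.

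The paper closes the cases you are worried about \emph{without} any hub-avoidance, by an asymmetric comparison that you missed: for a cross pair $u_1\in V(G_{i_1})\setminus B$, $u_2\in V(G_{i_2})\setminus B$ with neither vertex a forced $\mathds{1}$-vertex, if $u_2\neq v_{i_2}$ then all cross-connectivities from $u_2$ equal one, so $r_G(u_2,B_{i_1})=\mathds{1}\neq r_G(u_1,B_{i_1})$; if instead $u_2=v_{i_2}$, then $u_2\notin B$ means $B_{i_2}$ contains no hub of $G_{i_2}$, so $r_G(u_1,B_{i_2})=\mathds{1}\neq r_G(u_2,B_{i_2})$. In other words, one is free to choose \emph{which} component's block to compare on, and one can always pick a block from which the offending hub coordinate is absent, because the relevant hub is the non-landmark vertex itself. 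This kills the hub-transmission collisions for all pairs of non-forced vertices with arbitrary local bases. (Your concern is not entirely misplaced: for the one remaining pair type, a non-forced vertex against the surviving forced vertex $x$, the paper's one-line assertion $r_G(x,B)=\mathds{1}$ via \cref{eq:rofblock} is itself imprecise when $x$ happens to be a hub and some other component's basis contains its hub with $\kappa_H\geq 2$; but that residual case calls for a small local patch, such as a suitable choice of $x$ or of the bases, not for your global hub-avoidance hypothesis.)
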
%
\begin{proof}
    \textbf{Case $\ell=0$:} Let $B_i$ be an arbitrary connectivity basis in $G_i$, $i\in\{1,\ldots,k\}$ such that for every vertex $u_i\in V(G_i)$ it holds that $r_{G_i}(u_i,B_i)\neq\mathds{1}$.
    Our goal is to show that then $B=\bigcup_{i=1}^k B_i$ is a resolving set in $G$.

\begin{figure}
    \centering
    \begin{tikzpicture}[scale=1.6]

    \draw[very thick, fill=white, draw=lightgray, rounded corners=2.5mm] (0.025,1.4) -- (-1.2,0.35) -- (-0.7,-1) -- (0.75,-1.25) -- (1.35,0.075) -- cycle;

    \draw[black, thick] (0,1) to ($(0,1)!6/10!(0.49,1.31)$);
    \draw[black, thick, dotted, rounded corners=2mm] ($(0,1)!6/10!(0.49,1.31)$) to (0.49,1.31);

    \draw[black, thick] (0,1) to ($(0,1)!6/10!(-0.5,1.3)$);
    \draw[black, thick, dotted, rounded corners=2mm] ($(0,1)!6/10!(-0.5,1.3)$) to (-0.5,1.3);
    
    \draw[black, thick] (0,1) to ($(0,1)!6/10!(-0.47,0.6)$);
    \draw[black, thick, dotted, rounded corners=2mm] ($(0,1)!6/10!(-0.47,0.6)$) to (-0.47,0.6);

    \draw[black, thick] (-0.5,-0.75) to ($(-0.5,-0.75)!6/10!(-1.1,-0.83)$);
    \draw[black, thick, dotted, rounded corners=2mm] ($(-0.5,-0.75)!6/10!(-1.1,-0.83)$) to (-1.1,-0.83);

    \draw[black, thick] (-0.5,-0.75) to ($(-0.5,-0.75)!6/10!(0.1,-0.75)$);
    \draw[black, thick, dotted, rounded corners=2mm] ($(-0.5,-0.75)!6/10!(0.1,-0.75)$) to (0.1,-0.75);

    \draw[black, thick] (-0.5,-0.75) to ($(-0.5,-0.75)!6/10!(-0.7,-0.23)$);
    \draw[black, thick, dotted, rounded corners=2mm] ($(-0.5,-0.75)!6/10!(-0.7,-0.23)$) to (-0.7,-0.23);

    \draw[black, thick] (1,0) to ($(1,0)!6/10!(1.4,0.4)$);
    \draw[black, thick, dotted, rounded corners=2mm] ($(1,0)!6/10!(1.4,0.4)$) to (1.4,0.4);

    \draw[black, thick] (1,0) to ($(1,0)!6/10!(1.34,-0.46)$);
    \draw[black, thick, dotted, rounded corners=2mm] ($(1,0)!6/10!(1.34,-0.46)$) to (1.34,-0.46);
    
    \node[labeledvertex, fill=white] (1) at (0,1) {\footnotesize{$v_1$}};
    
    \node[labeledvertex, fill=white] (2) at (-0.5,-0.75) {\footnotesize{$v_i$}};
    
    \node[labeledvertex, fill=white] (3) at (1,0) {\footnotesize{$v_k$}};

    \draw[edge] (1) -- (3);
    \draw[edge] (2) -- (3);

    \draw[thick, fill=none, draw=navy, rounded corners=2.5mm, densely dashed] (-0.9,1.3) -- (0,0.65) -- (1,1.35);

    \draw[thick, fill=none, draw=navy, rounded corners=2.5mm, densely dashed] (1.4,0.8) -- (0.65,0) -- (1.4,-0.9);

    \draw[thick, fill=none, draw=navy, rounded corners=2.5mm, densely dashed] (-1.3,-0.6) -- (-0.35,-0.475) -- (0,-1.3);
        
    \node at (0,0) {\textcolor{gray}{$H$}};
    \node at (0.05,1.5) {\textcolor{navy}{$G_1$}};
    \node at (-0.8,-1.1) {\textcolor{navy}{$G_i$}};
    \node at (1.525,0) {\textcolor{navy}{$G_k$}};
    
    \end{tikzpicture}
    \caption{Illustration of \cref{lem:cdimHjoin}}
    \label{fig:Hjoin}
\end{figure}

    Consider two distinct vertices $u_1\in V(G_{i_1})\setminus B_{i_1}$ and $u_2\in V(G_{i_2})\setminus B_{i_2}$, $i_1,i_2\in\{1,\ldots,k\}$.
    If $i_1=i_2$, then $r_{G_{i_1}}(u_1,B_{i_1})\neq r_{G_{i_1} }(u_2,B_{i_1})$, because $B_{i_1}$ is a resolving set in $G_{i_1}$.
    Then, by \cref{eq:rofblock}, $r_{G}(u_1,B)\neq r_{G}(u_2,B)$, as $v_{i_1}$ is a cut vertex of $G$.
    Let therefore~$i_1\neq i_2$.
    By construction, $r_G(u_1,B_{i_1})=r_{G_{i_1}}(u_1,B_{i_1})\neq\mathds{1}$ and $r_G(u_2,B_{i_2})=r_{G_{i_2}}(u_2,B_{i_2})\neq\mathds{1}$. 
    The only pair of vertices in $V(G_{i_1})\times V(G_{i_2})$ that potentially has local connectivity other than one is $(v_{i_1},v_{i_2})$ as any path from $V(G_{i_2})$ to $V(G_{i_1})$ has to pass through $v_{i_1}$ and $v_{i_2}$. 
    Therefore, if $u_2\neq v_{i_2}$, we have $r_G(u_2,B_{i_1})=\mathds{1}\neq r_G(u_1,B_{i_1})$. 
    If however $u_2=v_{i_2}$, then $v_{i_2}\not\in B_{i_2}$, and thus $r_G(u_1,B_{i_2})=\mathds{1}\neq r_G(u_2,B_{i_2})$. 
    So $B$ is resolving and $\cdim(G)\leq|B|=\sum_{i=1}^k\cdim(G_i)$.

    Let us verify that the inequality is attained if $H$ is a tree. 
    As such, $H$ is uniformly \mbox{$1$-connected}.
    Consider a basis $B$ in $G$. 
    We show that $B_i\coloneqq B\cap V(G_i)$ is a resolving set in $G_i$.
    For this, let $u_1,u_2\in V(G_i)$. 
    Since $B$ is resolving, we know that $r_G(u_1,B)\neq r_G(u_2,B)$. 
    If $v_i\notin\{u_1,u_2\}$, we immediately obtain that $r_G(u_1,B\setminus B_i)=r_G(u_2,B\setminus B_i)=\mathds{1}$. 
    However, since $H$ is a tree, the same holds true for $v_i\in\{u_1,u_2\}$. 
    By \cref{eq:rofblock}, $r_{G_i}(u_1,B_i)=r_G(u_1,B_i)\neq r_G(u_2,B_i)=r_{G_i}(u_2,B_i)$.
    In particular, $\cdim(G_i)\leq|B_i|$.
    Combining this yields\vspace{-1ex} 
    \begin{equation*}
        \cdim(G)=|B|=\sum_{i=1}^k|B_i|\geq\sum_{i=1}^k\cdim(G_i).\vspace{-1ex}
    \end{equation*}    
    \textbf{Case $\ell\geq1$:} Let $B_i$ be connectivity bases in $G_i$, $i\in\{1,\ldots,k\}$, respectively. 
    Furthermore, let the $B_i$ be chosen such that $r_{G_i}(w,B_i)=\mathds{1}$ for some vertex $w\in V(G_i)$ only if $G_i$ is forcing a $\mathds{1}$ representation.
    From each graph $G_i$ that is forcing a $\mathds{1}$ representation, we collect in $L$ the vertex $w_i\in V(G_i)$ with connectivity representation $r_{G_i}(w_i,B_i)=\mathds{1}$.
    Our goal is to show that for each $x\in L$, $B=\bigcup_{i=1}^kB_i\cup(L\setminus\{x\})$ is a resolving set of $G$.
    
    Analogously to the case $\ell=0$, we find a vertex in $B$ that distinguishes $u_1,u_2\in V(G)\setminus L$. 
    Furthermore, $r_G(u_1,B)\neq\mathds{1}=r_G(x,B)$ by \cref{eq:rofblock}. 
    Thus, $B$ is resolving and we obtain $\cdim(G)\leq|B|=\sum_{i=1}^k\cdim(G_i)+\ell-1$.

    To show that the bound is attained if $H$ is a tree, let a basis $B$ in $G$ be given.
    Analogously to the case $\ell=0$, $B\cap V(G_i)$ is a resolving set in $G_i$ for all $i\in\{1,\ldots,k\}$. 
    Furthermore, there can be at most one graph $G_{i_0}$ containing a vertex $u_0\in V(G_{i_0})$ with $r_{G_{i_0}}(u_0,B\cap V(G_{i_0}))=\mathds{1}$. 
    Take $i\neq i_0$ such that $G_i$ is forcing a $\mathds{1}$ representation. 
    There are at least $\ell-1$ of those. 
    We have that $r_{G_i}(u,B\cap V(G_i))\neq\mathds{1}$ for every $u\in V(G_i)$. 
    This means that $B\cap V(G_i)$ cannot be a basis. 
    Since it is resolving, we obtain that $\cdim(G_i)+1\leq|B\cap V(G_i)|$ and thus
    \begin{equation*}
        \cdim(G)=|B|=\sum_{i=1}^k B\cap V(G_i)\geq\sum_{i=1}^k\cdim(G_i)+\ell-1.\qedhere
    \end{equation*}
\end{proof}

\begin{example}
    Consider the graph on the left in \cref{fig:Hjoinbound}. None of the subgraphs $G_i$ is forcing a $\mathds{1}$ representation. So we are in the setting $\ell=0$ of \cref{lem:cdimHjoin} and obtain
    \begin{equation*}
        \cdim(G)\leq\cdim(G_1)+\cdim(G_2)+\cdim(G_3)=2+2+2=6.
    \end{equation*}
    Since $H$ is not a tree, the bound does not have to be tight. Indeed, a resolving set of size five is shown in \cref{fig:Hjoinbound}.
    
    Now consider the graph on the right.
    Again $\cdim(G_1)+\cdim(G_2)+\cdim(G_3)=6$, but this time the bound is attained. 
    The vertex of degree two is not contained in any connectivity basis of a house graph.
    In order to distinguish all vertices of the house graph, any resolving set of $G$ has to contain two other vertices of each house.
    A possible basis is illustrated in \cref{fig:Hjoinbound}.
    This also shows that for the above bound to be tight it is sufficient but not necessary for $H$ to be a tree. The following corollary now details the situation where $H$ is a bridge.

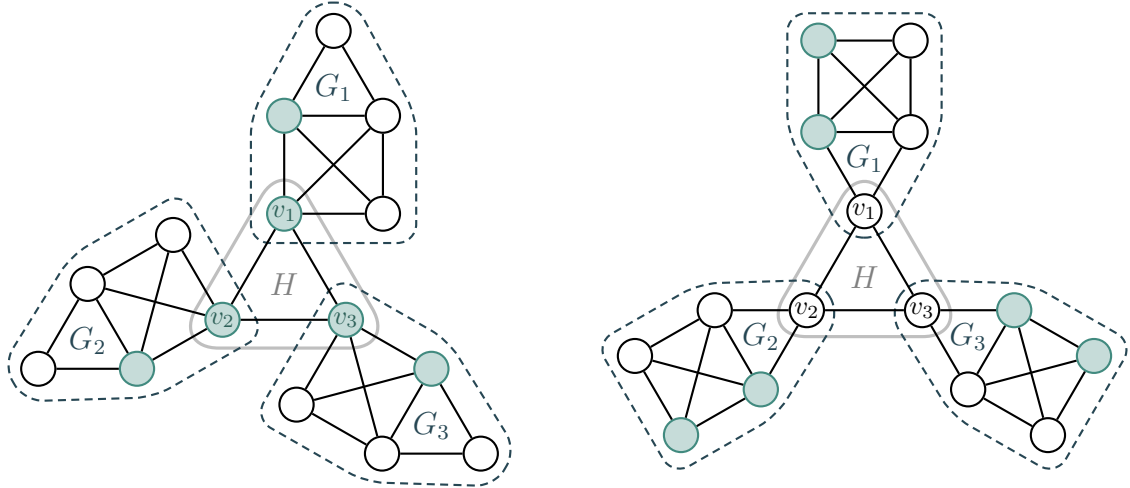
\begin{figure}
\centering
\begin{minipage}{.5\textwidth}
\centering
\begin{tikzpicture}[scale=0.375]

\begin{scope}[scale=2.25]
\draw[very thick, fill=white, draw=lightgray, rounded corners=6mm] (0,2) -- (-1.732,-1) -- (1.732,-1) -- cycle;
\end{scope}

\node at (0,0) {\textcolor{gray}{$H$}};

\begin{scope}[shift={(0,0.5)}]

\draw[thick, fill=none, draw=navy, rounded corners=2.5mm, densely dashed] (2.482,9.464) -- (0.982,9.464) -- (-1.168,5.464) -- (-1.168,0.964) -- (4.632,0.964) -- (4.632,5.464) -- cycle;

\node at (1.732,6.464) {\textcolor{navy}{$G_1$}};

\node[labeledvertex, cyan, fill=cyan!30!white] (G1bl) at (0,2) {\textcolor{cyan!70!black}{\footnotesize{$v_1$}}};
\node[labeledvertex] (G1br) at (3.464,2) {};
\node[labeledvertex, cyan, fill=cyan!30!white] (G1tl) at (0,5.464) {};
\node[labeledvertex] (G1tr) at (3.464,5.464) {};
\node[labeledvertex] (G1th) at (1.732,8.464) {};

\draw[edge] (G1bl) -- (G1br);
\draw[edge] (G1bl) -- (G1tl);
\draw[edge] (G1bl) -- (G1tr);
\draw[edge] (G1br) -- (G1tl);
\draw[edge] (G1br) -- (G1tr);
\draw[edge] (G1tl) -- (G1tr);
\draw[edge] (G1tl) -- (G1th);
\draw[edge] (G1tr) -- (G1th);
\end{scope}

\begin{scope}[rotate=120]
\begin{scope}[shift={(0,0.5)}]

\draw[thick, fill=none, draw=navy, rounded corners=2.5mm, densely dashed] (2.482,9.464) -- (0.982,9.464) -- (-1.168,5.464) -- (-1.168,0.964) -- (4.632,0.964) -- (4.632,5.464) -- cycle;

\node at (1.732,6.464) {\textcolor{navy}{$G_2$}};

\node[labeledvertex, cyan, fill=cyan!30!white] (G2bl) at (0,2) {\textcolor{cyan!70!black}{\footnotesize{$v_2$}}};
\node[labeledvertex] (G2br) at (3.464,2) {};
\node[labeledvertex, cyan, fill=cyan!30!white] (G2tl) at (0,5.464) {};
\node[labeledvertex] (G2tr) at (3.464,5.464) {};
\node[labeledvertex] (G2th) at (1.732,8.464) {};

\draw[edge] (G2bl) -- (G2br);
\draw[edge] (G2bl) -- (G2tl);
\draw[edge] (G2bl) -- (G2tr);
\draw[edge] (G2br) -- (G2tl);
\draw[edge] (G2br) -- (G2tr);
\draw[edge] (G2tl) -- (G2tr);
\draw[edge] (G2tl) -- (G2th);
\draw[edge] (G2tr) -- (G2th);
\end{scope}
\end{scope}

\begin{scope}[rotate=240]
\begin{scope}[shift={(0,0.5)}]

\draw[thick, fill=none, draw=navy, rounded corners=2.5mm, densely dashed] (2.482,9.464) -- (0.982,9.464) -- (-1.168,5.464) -- (-1.168,0.964) -- (4.632,0.964) -- (4.632,5.464) -- cycle;

\node at (1.732,6.464) {\textcolor{navy}{$G_3$}};

\node[labeledvertex, cyan, fill=cyan!30!white] (G3bl) at (0,2) {\textcolor{cyan!70!black}{\footnotesize{$v_3$}}};
\node[labeledvertex] (G3br) at (3.464,2) {};
\node[labeledvertex, cyan, fill=cyan!30!white] (G3tl) at (0,5.464) {};
\node[labeledvertex] (G3tr) at (3.464,5.464) {};
\node[labeledvertex] (G3th) at (1.732,8.464) {};

\draw[edge] (G3bl) -- (G3br);
\draw[edge] (G3bl) -- (G3tl);
\draw[edge] (G3bl) -- (G3tr);
\draw[edge] (G3br) -- (G3tl);
\draw[edge] (G3br) -- (G3tr);
\draw[edge] (G3tl) -- (G3tr);
\draw[edge] (G3tl) -- (G3th);
\draw[edge] (G3tr) -- (G3th);
\end{scope}
\end{scope}

\draw[edge] (G1bl) -- (G2bl);
\draw[edge] (G1bl) -- (G3bl);
\draw[edge] (G2bl) -- (G3bl);

\end{tikzpicture}
\end{minipage}%
\begin{minipage}{0.5\textwidth}
\centering
\begin{tikzpicture}[scale=0.35]

\begin{scope}[scale=2.25]
\draw[very thick, fill=white, draw=lightgray, rounded corners=6mm] (0,2) -- (-1.732,-1) -- (1.732,-1) -- cycle;
\end{scope}

\node at (0,0) {\textcolor{gray}{$H$}};

\begin{scope}[shift={(0,0.5)}]

\draw[thick, fill=none, draw=navy, rounded corners=2.5mm, densely dashed] (-0.75,1) -- (0.75,1) -- (2.9,5) --(2.9,9.5) -- (-2.9,9.5) -- (-2.9,5) -- cycle;

\node at (0,4) {\textcolor{navy}{$G_1$}};

\node[labeledvertex] (v1) at (0,2) {\footnotesize{$v_1$}};
\node[labeledvertex, cyan, fill=cyan!30!white] (G1bl) at (-1.732,5) {};
\node[labeledvertex] (G1br) at (1.732,5) {};
\node[labeledvertex, cyan, fill=cyan!30!white] (G1tl) at (-1.732,8.464) {};
\node[labeledvertex] (G1tr) at (1.732,8.464) {};
\draw[edge] (v1) -- (G1bl);
\draw[edge] (v1) -- (G1br);
\draw[edge] (G1bl) -- (G1br);
\draw[edge] (G1bl) -- (G1tl);
\draw[edge] (G1bl) -- (G1tr);
\draw[edge] (G1br) -- (G1tl);
\draw[edge] (G1br) -- (G1tr);
\draw[edge] (G1tl) -- (G1tr);
\end{scope}

\begin{scope}[rotate=120]
\begin{scope}[shift={(0,0.5)}]

\draw[thick, fill=none, draw=navy, rounded corners=2.5mm, densely dashed] (-0.75,1) -- (0.75,1) -- (2.9,5) --(2.9,9.5) -- (-2.9,9.5) -- (-2.9,5) -- cycle;

\node at (0,4) {\textcolor{navy}{$G_2$}};

\node[labeledvertex] (v2) at (0,2) {\footnotesize{$v_2$}};
\node[labeledvertex, cyan, fill=cyan!30!white] (G2bl) at (-1.732,5) {};
\node[labeledvertex] (G2br) at (1.732,5) {};
\node[labeledvertex, cyan, fill=cyan!30!white] (G2tl) at (-1.732,8.464) {};
\node[labeledvertex] (G2tr) at (1.732,8.464) {};
\draw[edge] (v2) -- (G2bl);
\draw[edge] (v2) -- (G2br);
\draw[edge] (G2bl) -- (G2br);
\draw[edge] (G2bl) -- (G2tl);
\draw[edge] (G2bl) -- (G2tr);
\draw[edge] (G2br) -- (G2tl);
\draw[edge] (G2br) -- (G2tr);
\draw[edge] (G2tl) -- (G2tr);
\end{scope}
\end{scope}

\begin{scope}[rotate=240]
\begin{scope}[shift={(0,0.5)}]

\draw[thick, fill=none, draw=navy, rounded corners=2.5mm, densely dashed] (-0.75,1) -- (0.75,1) -- (2.9,5) --(2.9,9.5) -- (-2.9,9.5) -- (-2.9,5) -- cycle;

\node at (0,4) {\textcolor{navy}{$G_3$}};

\node[labeledvertex] (v3) at (0,2) {\footnotesize{$v_3$}};
\node[labeledvertex, cyan, fill=cyan!30!white] (G3bl) at (-1.732,5) {};
\node[labeledvertex] (G3br) at (1.732,5) {};
\node[labeledvertex, cyan, fill=cyan!30!white] (G3tl) at (-1.732,8.464) {};
\node[labeledvertex] (G3tr) at (1.732,8.464) {};
\draw[edge] (v3) -- (G3bl);
\draw[edge] (v3) -- (G3br);
\draw[edge] (G3bl) -- (G3br);
\draw[edge] (G3bl) -- (G3tl);
\draw[edge] (G3bl) -- (G3tr);
\draw[edge] (G3br) -- (G3tl);
\draw[edge] (G3br) -- (G3tr);
\draw[edge] (G3tl) -- (G3tr);
\end{scope}
\end{scope}

\draw[edge] (v1) -- (v2);
\draw[edge] (v1) -- (v3);
\draw[edge] (v2) -- (v3);

\end{tikzpicture}
\end{minipage}%
\caption{Two examples illustrating \cref{lem:cdimHjoin}, one where the upper bound is not attained on the left and one where it is actually an equality on the right.}
\label{fig:Hjoinbound}
\end{figure}
\end{example}

\begin{corollary}\label{cor:cdimofbridge}
    Let $G_1$ and $G_2$ be two connected graphs and let $v_1\in G_1$ and $v_2\in G_2$ be arbitrary vertices. Furthermore, let $G=(V(G_1)\cup V(G_2),E(G_1)\cup E(G_2)\cup v_1v_2)$ be the graph that arises by joining $G_1$ and $G_2$ via the edge $v_1v_2$, which is then a bridge in $G$. Then
    \begin{equation*}
        \cdim(G)=\begin{cases}
        \cdim(G_1)+\cdim(G_2)+1 &\text{ if $G_1$ and $G_2$ force a $\mathds{1}$ representation},\\
            \cdim(G_1)+\cdim(G_2) &\text{ otherwise}.
        \end{cases}
    \end{equation*}
\end{corollary}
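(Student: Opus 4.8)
The plan is to obtain this directly as the special case $k=2$ of \cref{lem:cdimHjoin}. Set $H$ to be the graph on vertex set $\{v_1,v_2\}$ consisting of the single edge $v_1v_2$; then $G$ is precisely the graph obtained from $G_1$ and $G_2$ by adding the edges of $H$, matching the construction in the lemma. Since $H=K_2$ is a tree, \cref{lem:cdimHjoin} applies in its equality form and gives
\begin{equation*}
    \cdim(G)=\max\{\ell-1,0\}+\cdim(G_1)+\cdim(G_2),
\end{equation*}
where $\ell\in\{0,1,2\}$ is the number of graphs among $G_1,G_2$ forcing a $\mathds{1}$ representation.

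It then remains only to unfold the possible values of $\ell$. When both $G_1$ and $G_2$ force a $\mathds{1}$ representation we have $\ell=2$, hence $\max\{\ell-1,0\}=1$ and $\cdim(G)=\cdim(G_1)+\cdim(G_2)+1$, the first case of the statement. In any other situation at most one of the two graphs forces a $\mathds{1}$ representation, so $\ell\in\{0,1\}$ and $\max\{\ell-1,0\}=0$, yielding $\cdim(G)=\cdim(G_1)+\cdim(G_2)$, the second case. These two alternatives exhaust the dichotomy in the corollary.

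Because the entire substance of the argument is already contained in \cref{lem:cdimHjoin}, I do not expect a genuine obstacle here; the result is essentially a bookkeeping specialization. The one point I would check explicitly is the lemma's standing hypothesis that each $G_i$ has at least two vertices. If $G_1$ or $G_2$ is a single vertex, the construction only appends a pendant vertex, and I would dispatch this degenerate case by a short direct verification, using that a one-vertex graph has connectivity dimension zero and recording separately whether it forces a $\mathds{1}$ representation.
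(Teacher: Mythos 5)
Your proposal is correct and follows essentially the same route as the paper: the paper's proof likewise obtains the result as the tree case $H=K_2$ of \cref{lem:cdimHjoin}, noting that the only situation not covered by the lemma is when $G_1$ or $G_2$ is a single vertex, which it then dispatches by direct verification exactly as you suggest. Your explicit unfolding of $\ell\in\{0,1,2\}$ and your flagging of the at-least-two-vertices hypothesis are precisely the bookkeeping the paper leaves implicit.
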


\begin{proof}
    The only case that is not covered by \cref{lem:cdimHjoin} is that where $G_1$ or $G_2$ are single vertices. But it is easy to verify this as well.
\end{proof}

A simple, yet important special case of this result arises if $G_1$ is an arbitrary graph and~$G_2$ a single vertex.

\begin{corollary}\label{cor:attachaleaf}
    Attaching a leaf to a graph $G$ affects the connectivity dimension if and only if $G$ is forcing a $\mathds{1}$ representation.
    If it does, $\cdim(G)$ increases by one.
    In particular, it does not matter where the leaf is attached.
\end{corollary}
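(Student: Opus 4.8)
The plan is to deduce the statement directly from \cref{cor:cdimofbridge}. Attaching a leaf $u$ to a vertex $v_1$ of $G$ is exactly the operation of joining $G_1\coloneqq G$ and the one-vertex graph $G_2\coloneqq(\{u\},\emptyset)$ by the bridge $v_1u$, so \cref{cor:cdimofbridge} applies verbatim once the role of the trivial factor $G_2$ is understood. The whole argument therefore reduces to pinning down two pieces of data for a single vertex: its connectivity dimension and whether it forces a $\mathds{1}$ representation.

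First I would record that a one-vertex graph has connectivity dimension $0$, so its unique basis is the empty set $\emptyset$. The connectivity representation of its only vertex with respect to $\emptyset$ is the empty vector, which is vacuously the all-ones vector $\mathds{1}$. Hence a single vertex always forces a $\mathds{1}$ representation. This degenerate bookkeeping is the only genuinely delicate point, and I expect it to be the main (if modest) obstacle: one must be comfortable treating the empty representation as a $\mathds{1}$ representation, consistent with the convention $\mathds{1}=[1,\ldots,1]$ of possibly zero length.

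With these two facts in hand, the clause ``$G_1$ and $G_2$ force a $\mathds{1}$ representation'' in \cref{cor:cdimofbridge} collapses to ``$G$ forces a $\mathds{1}$ representation,'' since the factor $G_2$ forces one unconditionally. Substituting $\cdim(G_2)=0$ into the two cases of \cref{cor:cdimofbridge} yields, for the augmented graph $G'$, that $\cdim(G')=\cdim(G)+1$ when $G$ forces a $\mathds{1}$ representation and $\cdim(G')=\cdim(G)$ otherwise. This is precisely the claimed dichotomy: the connectivity dimension is affected exactly when $G$ forces a $\mathds{1}$ representation, and then it grows by one.

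For the final sentence I would observe that the right-hand sides above depend only on $\cdim(G)$ and on whether $G$ forces a $\mathds{1}$ representation, both intrinsic to $G$ and independent of the attachment vertex $v_1$; hence the resulting connectivity dimension does not depend on where the leaf is attached. If one preferred a self-contained route avoiding \cref{cor:cdimofbridge}, one could note that in $G'$ the local connectivity is unchanged among the vertices of $G$, while the leaf satisfies $r_{G'}(u,W)=\mathds{1}$ for every $W\subseteq V(G)\setminus\{u\}$ and, used as a landmark, assigns the value $1$ to every vertex of $G$. The latter makes $u$ redundant for separating pairs inside $V(G)$, so $\cdim(G')\geq\cdim(G)$; and a basis $B$ of $G$ resolves $G'$ exactly when no vertex of $G$ already carries the representation $\mathds{1}$ under $B$, which forces $\cdim(G')=\cdim(G)$ in the non-forcing case and pins the increase at exactly one in the forcing case\,---\,reproducing the same dichotomy.
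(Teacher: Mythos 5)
Your proposal is correct and takes essentially the same route as the paper, which presents \cref{cor:attachaleaf} precisely as the special case of \cref{cor:cdimofbridge} in which $G_2$ is a single vertex. Your explicit bookkeeping that $K_1$ has the empty set as its unique basis and hence vacuously forces a $\mathds{1}$ representation is exactly the convention under which the case distinction of \cref{cor:cdimofbridge} collapses to the stated dichotomy, and your direct verification at the end matches the ``easy to verify'' single-vertex case the paper alludes to in the proof of \cref{cor:cdimofbridge}.
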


We conclude this section with a relation between the number of blocks of a graph and its connectivity dimension.

\begin{theorem}\label{thm:cdimblocksratio}
    For a connected graph $G$ on at least two vertices there holds 
    \begin{equation*}
        \cdim(G)\geq\frac{\blocks(G)+1}{2}.   
    \end{equation*}
\end{theorem}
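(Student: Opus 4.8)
The plan is to prove the equivalent inequality $\blocks(G) \le 2\cdim(G) - 1$ by analysing a fixed connectivity basis $B$ on the block-cut tree $T$ of $G$. The structural starting point I would record first is that $\kappa_G(v,w) \ge 2$ holds precisely when $v$ and $w$ lie in a common non-bridge block: two independent paths force a common cycle, hence a common $2$-connected block, and conversely. Together with \cref{eq:rofblock} this shows that an inner vertex $v$ of a block $Q$ has $r_G(v,B)$ equal to $1$ in every coordinate outside $B\cap V(Q)$. In particular the all-ones representation $\mathds{1}$ can occur at most once, so at most one block of $G$ can be entirely free of landmarks. This is the quantitative kernel I would exploit repeatedly.

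The main engine I would use is an induction on $\blocks(G)$, peeling a leaf (end) block $Q$ of $T$ with cut vertex $c$ and passing to $G' = G - (V(Q)\setminus\{c\})$, which has one block fewer, while tracking the potential $2\cdim - \blocks$; the base case $\blocks=1$ gives $2\cdim(G)-1\ge 1$ since $\cdim(G)\ge 1$. Two mechanisms feed the count. When $Q$ is a pendant edge, the change in $\cdim$ is governed by \cref{cor:attachaleaf}: attaching the leaf raises $\cdim$ exactly when $G'$ forces a $\mathds{1}$ representation, and when it does \emph{not}, the enlarged graph itself forces a $\mathds{1}$ representation, because that new leaf can lie in no basis once $\cdim(G')\ge 1$; so the forcing status flips and later pays back the potential. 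When $Q$ is $2$-connected it forces a \emph{private} landmark: its $t\ge 2$ inner vertices are invisible to anything outside $V(Q)$, and its single cut vertex $c$ cannot separate them, since the values $\kappa(a_j,c)$ lie in $\{2,\dots,\deg_Q(c)\}$ with $\deg_Q(c)\le t$, giving at most $t-1$ distinct values for $t$ vertices; hence $B$ must meet the interior of $Q$. By \cref{lem:removeblock} applied on both sides of $c$, such private landmarks are distinct across distinct end blocks, which is what lets landmarks and blocks be matched up.

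The step I expect to be the real obstacle is that peeling an end block need \emph{not} increase $\cdim$, so the potential can genuinely drop by one: the introductory graph, whose block graph is a path of three blocks (\cref{fig:blockgraphexample}) yet has $\cdim=2$, shows the bound is tight and that removing the triangular end block leaves $\cdim$ unchanged. The reason is the slack term $\{c\}$ in \cref{lem:removeblock}: when $c\notin B$, the one private landmark of $Q$ must simultaneously resolve $Q$ internally and separate $c$ from the vertices of $G'$, so no net gain is guaranteed, and a shared cut-vertex landmark can likewise service several incident blocks at once. A naive monotone induction therefore fails, and the factor exactly $2$ (rather than a bound of order $\blocks$) has to be recovered by strengthening the invariant — carrying along the forcing-$\mathds{1}$ status and the position of the unique $\mathds{1}$-vertex — or equivalently by a global charging on $T$ in which every free-riding block (internal bridges, and $2$-connected blocks consisting only of cut vertices) is paired, through a rooting of $T$, with a distinct landmark-forcing end feature. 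Bounding how many blocks one shared cut vertex can cover, again via the pigeonhole $\kappa(\cdot,c)\le\deg(c)$, is the calculation I expect to grind on to close the count at $\blocks(G)\le 2\cdim(G)-1$.
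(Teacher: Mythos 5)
Your proposal assembles the right ingredients and they are in fact the same ones the paper uses: the forcing-$\mathds{1}$ invariant maintained via leaf attachment (\cref{cor:attachaleaf}), private landmarks in the interiors of $2$-connected end blocks, and \cref{lem:removeblock} as the peeling device on the block tree. You also correctly diagnose the obstacle: peeling a single end block need not raise $\cdim$, so the potential $2\cdim-\blocks$ can drop and a naive one-block induction fails. But the proposal stops exactly there. The entire content of the theorem is the strengthened induction you explicitly defer as ``the calculation I expect to grind on,'' and it is not routine. The paper's resolution has two components you do not supply. First, it shows that a minimal counterexample forcing a $\mathds{1}$ representation has every bridge incident to a leaf (via \cref{cor:cdimofbridge} applied to a bridge split, which would otherwise produce a smaller counterexample), so every non-pendant block is $2$-connected. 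Second, and decisively, it does \emph{not} peel one end block at a time: it locates a block $Q_{d-1}$ at maximal depth in the block tree and deletes it \emph{together with all $k$ adjacent leaf blocks simultaneously}, removing $k+1$ blocks at once, and then closes a trichotomy --- $k\geq 3$ (the $k$ interior landmarks of the leaves alone suffice for the count), $k=1$ ($Q_{d-1}$ is $2$-connected, hence has an inner vertex, which must itself be a landmark since only the unique forced vertex may have representation $\mathds{1}$), and $k=2$ (either $Q_{d-1}$ has an inner landmark, or it is a triangle all of whose vertices are articulations, in which case a third landmark must lie in $Q_{d-1}\cup L_1\cup L_2$ because landmarks in $L_1$ cannot distinguish vertices of $L_2$) --- plus separate base cases when the forced vertex's block meets every other block. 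This simultaneous multi-block peeling is precisely the mechanism that makes the potential argument close at the factor $2$, and neither of your two suggested repairs (a strengthened single-block invariant, or a global charging on $T$) is carried out even in outline.

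There is also one concrete false step: your ``quantitative kernel'' that at most one block of $G$ can be entirely free of landmarks does not follow from the uniqueness of the $\mathds{1}$ representation. That inference is valid only for blocks possessing an inner vertex; blocks all of whose vertices are cut vertices --- internal bridges between two articulations, or the triangle of articulations that the paper must handle explicitly in its $k=2$ case --- can be landmark-free in any number, since their vertices are resolved through landmarks in incident blocks. (Your own pigeonhole observation shows why: a cut vertex $c$ acquires entries $\kappa(c,w)\geq 2$ from landmarks $w$ in its other blocks.) You partially acknowledge this later when you speak of ``free-riding blocks,'' but the charging scheme you propose would be built on the false kernel, so the error is not cosmetic: the correct statement, and the one the paper actually exploits, is that at most one \emph{vertex} can have representation $\mathds{1}$, hence every $2$-connected block whose interior avoids the unique forced vertex must contain an interior landmark.
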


\begin{proof}
    Note that, following \cref{cor:attachaleaf}, if we can find a counterexample $H$ to our claim, then either $H$ is forcing a $\mathds{1}$ representation, or we can attach a leaf to any vertex of $H$ to obtain a graph $H'$ with $\blocks(H')=\blocks(H)+1$ and
    \begin{equation*}
        \cdim(H')=\cdim(H)<\frac{\blocks(H)+1}{2}<\frac{\blocks(H')+1}{2}.
    \end{equation*}
    Thus, $H'$ is a counterexample as well.
    So, assuming the assertion is false, then there exists a counterexample that forces a $\mathds{1}$ representation.
    
    Let $H$ be a counterexample that forces a $\mathds{1}$ representation and that has minimum number of blocks possible.
    First, consider the case where $H$ consists of two subgraphs $H_1$ and $H_2$, on at least two vertices each, that are joined via a bridge.
    Then $\blocks(H)=\blocks(H_1)+\blocks(H_2)+1$ and $\blocks(H_1),\blocks(H_2)\geq1$.
    If $H_i$, $i\in\{1,2\}$, was a counterexample to the assertion of the theorem, then, similar as above, we could construct another counterexample $H_i'$ by attaching a leaf to $H_i$ that has fewer blocks than $H$.
    So, as neither $H_1$ nor $H_2$ are counterexamples, they satisfy $\cdim(H_i)\geq\frac{\blocks(H_i)+1}{2}$.
    By \cref{cor:cdimofbridge}, we obtain
    \begin{align*}
    \frac{\blocks(H_1)+\blocks(H_2)+2}{2} &= \frac{\blocks(H)+1}{2} \\[0.5ex]
    &>\cdim(H) \\[1.5ex]
    &\geq\cdim(H_1)+\cdim(H_2)\\[0.5ex]
    &\geq\frac{\blocks(H_1)+1}{2}+\frac{\blocks(H_2)+1}{2}=\frac{\blocks(H_1)+\blocks(H_2)+2}{2},
    \end{align*}
    which is a contradiction.
    It follows that $H_1$ or $H_2$ contains at most one vertex, that is, every bridge in $H$ is incident to a leaf.
    Hence, every block that does not contain a leaf is \mbox{$2$-connected}.
 
    Let $T$ be the block graph of $H$ and let $B$ be a connectivity basis in $H$.
    Also, denote by $v\in V(H)\setminus B$ the unique vertex with $r_H(v,B)=\mathds{1}$.
    Furthermore, let $R,L\in V(T)$ be two blocks of $H$ where $v\in R$ and $L$ is of maximal distance $2d$ from $R$ in $T$.
    
    Consider first the case where $d=1$.
    Then $R$ has nonempty intersection with every block in $H$.
    Since $K_2$ is no counterexample, \cref{thm:cdim_1} says that $\cdim(H)\geq2$.
    Since $H$ is chosen as a counterexample, $\blocks(H)\geq 4$.
    
    Suppose that $R=K_2$.
    Then $V(R)=\{v,w\}$ contains a leaf of $H$, and this leaf has to be $v$.
    Every block other than $R$ is connected to $R$ via $w$. 
    In fact, $T$ is a star with the cut vertex $w$ as its center.
    Then by \cref{thm:cdim_1}, the interior of every block other than $R$ has to have a landmark, because $w$ alone is not enough to distinguish the entire block. 
    Thus, using $\blocks(H)\geq4$, we obtain $\cdim(H)\geq\blocks(H)-1>\frac{\blocks(H)+1}{2}$, and so $H$ is not a counterexample.
    
    Now let $R$ be \mbox{$2$-connected}.
    Then $R$ does not contain a landmark, because if there was one, $v\in R$ would not have representation $r_H(v,B)=\mathds{1}$.
    However, since $2d=2$ is the maximum distance of any block to $R$ in $T$, we know that all cut vertices of $H$ are vertices in $R$. 
    Hence, every landmark in $H$ is an inner vertex of its respective block, and every block other than $R$ has to contain such a landmark.
    In particular, $\cdim(H)\geq\blocks(H)-1>\frac{\blocks(H)+1}{2}$.

    So let us focus on the case where $d>1$.
    Note that $T$ is a tree and therefore $L$ can be chosen to be a leaf in $T$. 
    We denote by $(R=Q_0,v_0,Q_1,v_1,\ldots,Q_{d-1},v_{d-1},Q_{d}=L)$ the shortest path from $R$ to $L$ in $T$.
    Let $L=L_1,L_2,\ldots,L_k$ be all leaves of $T$ that are adjacent to $Q_{d-1}$, excluding $R$ as the case may be.
    If $L_i$ represents a $K_2$ in $H$, for any $i$, then its inner vertex has to be a landmark, since only $v$ can have the connectivity representation $\mathds{1}$.
    Otherwise, $L_i$ is \mbox{$2$-connected}. 
    But then it also has to contain an inner vertex that is a landmark, as, by \cref{thm:cdim_1}, its cut vertex alone does not distinguish the entire block.
    The graph $H_s$ that we obtain from $H$ by deleting $Q_{d-1}\setminus\{v_{d-2}\},L_1,\ldots,L_k$ has $k+1$ fewer blocks than $H$, and by \cref{lem:removeblock}, we find that
    \begin{equation*}
        \cdim(H_s)\leq \cdim(H)-k+1<\frac{b_H+1}{2}-k+1=\frac{b_{H_s}+1}{2}+\frac{3}{2}-\frac{k}{2}.
    \end{equation*}
    Thus, if $k\geq3$, then $H_s$ is a counterexample, and by attaching a leaf if necessary, we obtain a counterexample $H_s'$ that forces a $\mathds{1}$ representation which is smaller than $H$.
    
    If $k=1$, then $Q_{d-1}$ has an inner vertex, since it is \mbox{$2$-connected} and thus of order at least three.
    Therefore, it has to contain a landmark as well, because otherwise this inner vertex would have connectivity vector $\mathds{1}$.
    In this case, \cref{lem:removeblock} we obtain
    \begin{equation*}
        \cdim(H_s)\leq\cdim(H)-2+1<\frac{b_H+1}{2}-1=\frac{b_{H_s}+1}{2},
    \end{equation*}
    as we delete two landmarks, one in $L\setminus Q_{d-1}$ and one in $Q_{d-1}$, which clearly cannot be the same. Consequently, $H_s$, plus a leaf if necessary, would once again be a smaller counterexample $H_s'$ that forces a $\mathds{1}$ representation.

    The case where $k=2$ remains.
    Then either $Q_{d-1}$ has an inner vertex and therefore a landmark, or it is a triangle, each of whose vertices is an articulation of $H$.
    In the latter~case, the two leaves $L_1$ and $L_2$ both contain an inner vertex and thus a landmark each.
    Yet, since $L_1$ and $L_2$ are disjoint, no landmark in one can distinguish any pair of vertices in the other.
    Thus, $B$ has to contain at least one further landmark in $Q_{d-1}\cup L_1\cup L_2$.
    So we find
    \begin{equation*}
        \cdim(H_s)\leq\cdim(H)-3+1<\frac{b_H+1}{2}-2=\frac{b_{H_s}+1}{2}-\frac{1}{2}<\frac{b_{H_s}+1}{2}
    \end{equation*}
    and, as before, either $H_s$ or $H_s'$ is a counterexample that forces a $\mathds{1}$ representation, which is smaller than $H$.
\end{proof}

The complete graph $K_n$ provides an example where the ratio $\smash{\frac{\cdim(G)}{\blocks(G)}}$ becomes arbitrarily large.
In this case, $\cdim(K_n)=n-1$, and $\blocks(K_n)=1$.
It is not that obvious what the smallest possible ratio is.
\cref{thm:cdimblocksratio} says that it is at least $\frac{1}{2}$. The following construction, which achieves a ratio of $\frac{2}{3}$, for an infinite family of graphs, is yet the best known to us.

\begin{example}\label{ex:triangles}
    Consider the graph $T_b$ obtained by joining $(b-1)$ triangles in a path-like manner and attaching a leaf at one end, which is illustrated in \cref{fig:Tn} for $b=6$.
    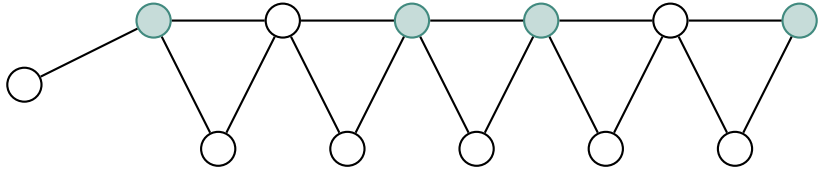
\begin{figure}
    \centering
    \begin{tikzpicture}[scale=1.7]
    \node[labeledvertex] (1) at (1,0.5) {};
    \node[labeledvertex, cyan, fill=cyan!30!white] (2) at (2,1) {};
    \node[labeledvertex] (3) at (3,1) {};
    \node[labeledvertex, cyan, fill=cyan!30!white] (4) at (4,1) {};
    \node[labeledvertex, cyan, fill=cyan!30!white] (5) at (5,1) {};
    \node[labeledvertex] (6) at (6,1) {};
    \node[labeledvertex, cyan, fill=cyan!30!white] (7) at (7,1) {};
    \node[labeledvertex] (8) at (2.5,0) {};
    \node[labeledvertex] (9) at (3.5,0) {};
    \node[labeledvertex] (10) at (4.5,0) {};
    \node[labeledvertex] (11) at (5.5,0) {};
    \node[labeledvertex] (12) at (6.5,0) {};

    \draw[edge] (1) -- (2);
    \draw[edge] (2) -- (3);
    \draw[edge] (3) -- (4);
    \draw[edge] (4) -- (5);
    \draw[edge] (5) -- (6);
    \draw[edge] (6) -- (7);
    \draw[edge] (2) -- (8);
    \draw[edge] (3) -- (8);
    \draw[edge] (3) -- (9);
    \draw[edge] (4) -- (9);
    \draw[edge] (4) -- (10);
    \draw[edge] (5) -- (10);
    \draw[edge] (5) -- (11);
    \draw[edge] (6) -- (11);
    \draw[edge] (6) -- (12);
    \draw[edge] (7) -- (12);
    
    \end{tikzpicture}
    \caption{The graph $T_6$ has six blocks and connectivity dimension four. The marked vertices form a connectivity basis.}
    \label{fig:Tn}
\end{figure}
The graph $T_b$ has $b$ blocks and has connectivity dimension
\begin{equation*}
    \cdim(T_b)=\begin{cases}
        \frac{2}{3}b&\text{if } 3\mid b,\\
        \frac{2}{3}b+\frac{1}{3}&\text{if } 3\mid b-1, \\
        \frac{2}{3}b+\frac{2}{3}&\text{if } 3\mid b-2.
    \end{cases}
\end{equation*}
\end{example}

\section{The complexity of determining the connectivity dimension}\label{sec:complexity}

Following the strategy of the proof in the appendix of Khuller, Raghavachari, and Rosenfeld~\cite{khuller1996landmarks}, which concerns the metric dimension, we show that 3-SAT can be reduced to determining the connectivity dimension.
For an arbitrary 3-SAT input $S$ on variables $X_1,\ldots,X_n$ and clauses $C_1,\ldots, C_m$, we construct a graph $G(S)$ whose connectivity dimension tells us whether $S$ is satisfiable. Without loss of generality, we may assume that for each variable $X_i$ there is a clause in which it appears. For each variable $X_i$ we construct a gadget $G(X_i)=(V(X_i),E(X_i))$ and for each clause~$C_j$ we construct a gadget $G(C_j)=(V(C_j),E(C_j))$ as depicted in \cref{fig:gadgets}.

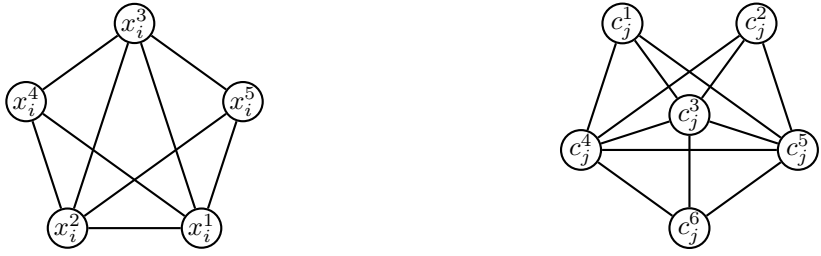
\begin{figure}[ht]
    \centering
        \begin{minipage}{.45\textwidth}
        \centering
    \begin{tikzpicture}[scale=1, rotate=18]

    \node[labeledvertex] (4) at (144:1.5) {\footnotesize{$x_i^4$}};
    \node[labeledvertex] (3) at (72:1.5) {\footnotesize{$x_i^3$}};
    \node[labeledvertex] (5) at (0:1.5) {\footnotesize{$x_i^5$}};
    \node[labeledvertex] (1) at (288:1.5) {\footnotesize{$x_i^1$}};
    \node[labeledvertex] (2) at (216:1.5) {\footnotesize{$x_i^2$}};

    \draw[edge] (1) -- (2);
    \draw[edge] (1) -- (3);
    \draw[edge] (1) -- (4); 
    \draw[edge] (1) -- (5);
    \draw[edge] (2) -- (3);
    \draw[edge] (2) -- (4);
    \draw[edge] (2) -- (5);
    \draw[edge] (3) -- (4);
    \draw[edge] (3) -- (5);

    \end{tikzpicture}
    \end{minipage}
    \begin{minipage}
    {0.45\textwidth}
        \centering
            \begin{tikzpicture}[scale=1, rotate=198]

    \node[labeledvertex] (4) at (144:1.5) {\footnotesize{$c_j^5$}};
    \node[labeledvertex] (3) at (0,0) {\footnotesize{$c_j^3$}};
    \node[labeledvertex] (5) at (0:1.5) {\footnotesize{$c_j^4$}};
    \node[labeledvertex] (1) at (288:1.5) {\footnotesize{$c_j^1$}};
    \node[labeledvertex] (2) at (216:1.5) {\footnotesize{$c_j^2$}};
    \node[labeledvertex] (6) at (72:1.5) {\footnotesize{$c_j^6$}};
    
    \draw[edge] (1) -- (3);
    \draw[edge] (1) -- (4);
    \draw[edge] (1) -- (5); 
    \draw[edge] (2) -- (3);
    \draw[edge] (2) -- (4);
    \draw[edge] (2) -- (5);
    \draw[edge] (3) -- (4);
    \draw[edge] (3) -- (5);
    \draw[edge] (4) -- (5);
    \draw[edge] (3) -- (6);
    \draw[edge] (4) -- (6);
    \draw[edge] (5) -- (6);

    \end{tikzpicture}
    \end{minipage}
    \caption{Gadget $G(X_i)$ for variable $X_i$ on the left and $G(C_j)$ for clause $C_j$ on the right.}
    \label{fig:gadgets}
\end{figure}

The construction of $G(S)$, which is illustrated in \cref{fig:ClauseAndVariables}, begins by adding all gadgets of occurring variables and clauses in~$S$ as isolated components. 
We then connect them by further edges as follows.
If the variable~$X_i$ occurs as a positive literal in the clause~$C_j$, we add the edges $\{c_j^1,x_i^1\}$, $\{c_j^2,x_i^1\}$ and $\{c_j^2,x_i^2\}$. 
If~$X_i$ occurs as a negative literal in $C_j$, we add the edges $\{c_j^1,x_i^1\}$, $\{c_j^1,x_i^2\}$ and $\{c_j^2,x_i^2\}$. 
For all $k$ where $X_k$ does not occur in $C_j$, neither as positive nor as negative literal, we do not add any edges between the respective gadgets. 

Lastly, we add the edges $\{c_j^1,c_k^1\},\{c_j^1,c_k^2\}, \{c_j^2,c_k^1\}$ and $\{c_j^2,c_k^2\}$ for all pairs of clauses $C_j\neq C_k$.
Note that if $G(S)$ is not connected then $S$ can be split into $S_1\wedge S_2$ such that no variable appears in both $S_1$ and $S_2$.
Then $S$ is satisfiable if and only if $S_1$ and $S_2$ are satisfiable.
Due to \cref{lem:disjointunion}, we can focus on instances $S$ where this is not possible and for which $G(S)$ is thus connected.

For the graph $G(S)$ we find the local connectivities 
\begin{align}
    \kappa(x_i^a,x_i^b)&\begin{cases}
        > 4 &\text{if } (a,b)=(1,2),\\
        =4 &\text{if } (a,b)=(1,3)\text{ or }(a,b)=(2,3),\\
        =3 &\text{otherwise,}
    \end{cases}\label{eq:samevargadget}\\[1ex]
    \kappa(c_j^a,c_j^b)&\begin{cases}
        >5 &\text{if } (a,b)=(1,2),\\
        =5 &\text{if } \{a,b\}\subseteq\{3,4,5\},\\
        =4 &\text{if } a\in\{1,2\},~b\in\{3,4,5\},\\
        =3 &\text{otherwise},
    \end{cases}\label{eq:sameclagadget}
\end{align}

assuming $a<b$.
Note that it is enough to consider $a<b$ since local connectivity is symmetric and $\kappa(x_i^a,x_i^a)=\kappa(c_j^a,c_j^a)=\infty$.

The removal of $x_i^1$ and $x_i^2$ disconnects $G(S)$ from $G(X_i)$. 
Similarly, the removal of $c_j^1$ and $c_j^2$ disconnects $G(S)$ from $G(C_j)$.
We conclude that

\begin{align}\label{eq:diffgadget}
\begin{split}
     \kappa(x_i^a,w)=2 &\text{ if } a\geq3 \text{ and } w\notin V(X_i),\\
     \kappa(x_i^a,w)\geq2 &\text{ if } a\in\{1,2\} \text{ and } w\notin V(X_i),\\
     \kappa(c_j^b,w)=2 &\text{ if } b\geq3 \text{ and } w\notin V(C_j)\\
     \kappa(c_j^b,w)\geq2 &\text{ if } b\in\{1,2\} \text{ and } w\notin V(C_j).
\end{split}
\end{align}

Lastly, we need the local connectivities $\kappa(c_j^a,x_i^b)$ for $a,b\in\{1,2\}$.
First, consider the case where $S$ consists only of a single clause $C_j$.
Then we obtain
\begin{equation}\label{eq:x-c-singleclause}
    \kappa(c_j^a,x_i^b)=\begin{cases}
        3 &\text{if } (a,b)=(2,1) \text{ and } X_i \text{ is a positive literal in } C_j,\\
        3 &\text{if } (a,b)=(1,2) \text{ and } X_i \text{ is a negative literal in } C_j,\\
        2 &\text{otherwise}.
    \end{cases}
\end{equation}
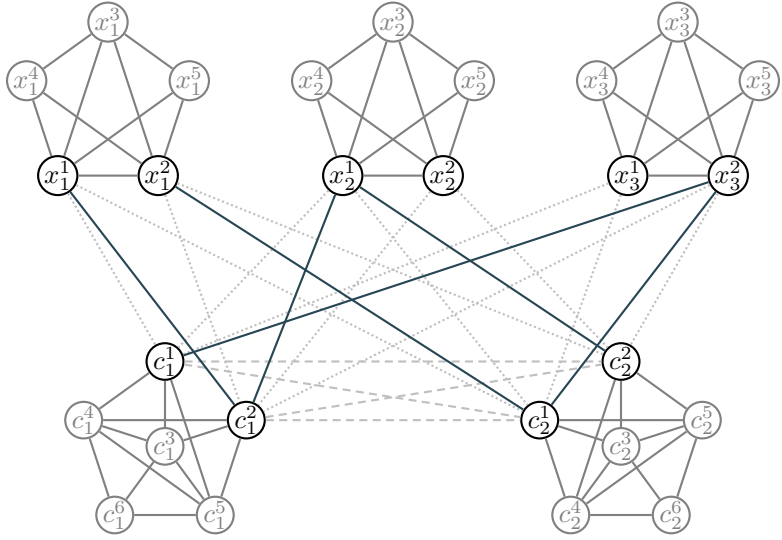
\begin{figure}
    \centering
    \begin{tikzpicture}[scale=0.75]

    \begin{scope}[shift={(1,0)}, rotate=18]
    \node[labeledvertex, gray] (x14) at (144:1.5) {\footnotesize{$x_1^4$}};
    \node[labeledvertex, gray] (x13) at (72:1.5) {\footnotesize{$x_1^3$}};
    \node[labeledvertex, gray] (x15) at (0:1.5) {\footnotesize{$x_1^5$}};
    \node[labeledvertex] (x12) at (288:1.5) {\footnotesize{$x_1^2$}};
    \node[labeledvertex] (x11) at (216:1.5) {\footnotesize{$x_1^1$}};

    \draw[edge, gray] (x11) -- (x12);
    \draw[edge, gray] (x11) -- (x13);
    \draw[edge, gray] (x11) -- (x14); 
    \draw[edge, gray] (x11) -- (x15);
    \draw[edge, gray] (x12) -- (x13);
    \draw[edge, gray] (x12) -- (x14);
    \draw[edge, gray] (x12) -- (x15);
    \draw[edge, gray] (x13) -- (x14);
    \draw[edge, gray] (x13) -- (x15);
    \end{scope}

    \begin{scope}[shift={(6,0)}, rotate=18]
    \node[labeledvertex, gray] (x24) at (144:1.5) {\footnotesize{$x_2^4$}};
    \node[labeledvertex, gray] (x23) at (72:1.5) {\footnotesize{$x_2^3$}};
    \node[labeledvertex, gray] (x25) at (0:1.5) {\footnotesize{$x_2^5$}};
    \node[labeledvertex] (x22) at (288:1.5) {\footnotesize{$x_2^2$}};
    \node[labeledvertex] (x21) at (216:1.5) {\footnotesize{$x_2^1$}};

    \draw[edge, gray] (x21) -- (x22);
    \draw[edge, gray] (x21) -- (x23);
    \draw[edge, gray] (x21) -- (x24); 
    \draw[edge, gray] (x21) -- (x25);
    \draw[edge, gray] (x22) -- (x23);
    \draw[edge, gray] (x22) -- (x24);
    \draw[edge, gray] (x22) -- (x25);
    \draw[edge, gray] (x23) -- (x24);
    \draw[edge, gray] (x23) -- (x25);
    \end{scope}

    \begin{scope}[shift={(11,0)}, rotate=18]
    \node[labeledvertex, gray] (x34) at (144:1.5) {\footnotesize{$x_3^4$}};
    \node[labeledvertex, gray] (x33) at (72:1.5) {\footnotesize{$x_3^3$}};
    \node[labeledvertex, gray] (x35) at (0:1.5) {\footnotesize{$x_3^5$}};
    \node[labeledvertex] (x32) at (288:1.5) {\footnotesize{$x_3^2$}};
    \node[labeledvertex] (x31) at (216:1.5) {\footnotesize{$x_3^1$}};

    \draw[edge, gray] (x31) -- (x32);
    \draw[edge, gray] (x31) -- (x33);
    \draw[edge, gray] (x31) -- (x34); 
    \draw[edge, gray] (x31) -- (x35);
    \draw[edge, gray] (x32) -- (x33);
    \draw[edge, gray] (x32) -- (x34);
    \draw[edge, gray] (x32) -- (x35);
    \draw[edge, gray] (x33) -- (x34);
    \draw[edge, gray] (x33) -- (x35);
    \end{scope}

    \begin{scope}[shift={(2,-6)}, rotate=162]

    \node[labeledvertex, gray] (c14) at (144:1.5) {\footnotesize{$c_1^5$}};
    \node[labeledvertex, gray] (c13) at (0,0) {\footnotesize{$c_1^3$}};
    \node[labeledvertex, gray] (c15) at (0:1.5) {\footnotesize{$c_1^4$}};
    \node[labeledvertex] (c11) at (288:1.5) {\footnotesize{$c_1^1$}};
    \node[labeledvertex] (c12) at (216:1.5) {\footnotesize{$c_1^2$}};
    \node[labeledvertex, gray] (c16) at (72:1.5) {\footnotesize{$c_1^6$}};
    
    \draw[edge, gray] (c11) -- (c13);
    \draw[edge, gray] (c11) -- (c14);
    \draw[edge, gray] (c11) -- (c15); 
    \draw[edge, gray] (c12) -- (c13);
    \draw[edge, gray] (c12) -- (c14);
    \draw[edge, gray] (c12) -- (c15);
    \draw[edge, gray] (c13) -- (c14);
    \draw[edge, gray] (c13) -- (c15);
    \draw[edge, gray] (c14) -- (c15);
    \draw[edge, gray] (c13) -- (c16);
    \draw[edge, gray] (c14) -- (c16);
    \draw[edge, gray] (c15) -- (c16);
    \end{scope}

    \begin{scope}[shift={(10,-6)}, rotate=234]

    \node[labeledvertex, gray] (c24) at (144:1.5) {\footnotesize{$c_2^5$}};
    \node[labeledvertex, gray] (c23) at (0,0) {\footnotesize{$c_2^3$}};
    \node[labeledvertex, gray] (c25) at (0:1.5) {\footnotesize{$c_2^4$}};
    \node[labeledvertex] (c21) at (288:1.5) {\footnotesize{$c_2^1$}};
    \node[labeledvertex] (c22) at (216:1.5) {\footnotesize{$c_2^2$}};
    \node[labeledvertex, gray] (c26) at (72:1.5) {\footnotesize{$c_2^6$}};
    
    \draw[edge, gray] (c21) -- (c23);
    \draw[edge, gray] (c21) -- (c24);
    \draw[edge, gray] (c21) -- (c25); 
    \draw[edge, gray] (c22) -- (c23);
    \draw[edge, gray] (c22) -- (c24);
    \draw[edge, gray] (c22) -- (c25);
    \draw[edge, gray] (c23) -- (c24);
    \draw[edge, gray] (c23) -- (c25);
    \draw[edge, gray] (c24) -- (c25);
    \draw[edge, gray] (c23) -- (c26);
    \draw[edge, gray] (c24) -- (c26);
    \draw[edge, gray] (c25) -- (c26);
    \end{scope}

    \draw[edge, lightgray, densely dotted] (x11) -- (c11);
    \draw[edge, lightgray, densely dotted] (x12) -- (c12);
    \draw[edge, lightgray, densely dotted] (x11) -- (c21);
    \draw[edge, lightgray, densely dotted] (x12) -- (c22);
    \draw[edge, lightgray, densely dotted] (x21) -- (c11);
    \draw[edge, lightgray, densely dotted] (x22) -- (c12);
    \draw[edge, lightgray, densely dotted] (x21) -- (c21);
    \draw[edge, lightgray, densely dotted] (x22) -- (c22);
    \draw[edge, lightgray, densely dotted] (x31) -- (c11);
    \draw[edge, lightgray, densely dotted] (x32) -- (c12);
    \draw[edge, lightgray, densely dotted] (x31) -- (c21);
    \draw[edge, lightgray, densely dotted] (x32) -- (c22);

    \draw[edge, lightgray, densely dashed] (c11) -- (c21);
    \draw[edge, lightgray, densely dashed] (c11) -- (c22);
    \draw[edge, lightgray, densely dashed] (c12) -- (c21);
    \draw[edge, lightgray, densely dashed] (c12) -- (c22);
    
    \draw[edge, navy] (x11) -- (c12);
    \draw[edge, navy] (x12) -- (c21);
    \draw[edge, navy] (x21) -- (c12);
    \draw[edge, navy] (x21) -- (c22);
    \draw[edge, navy] (x32) -- (c11);
    \draw[edge, navy] (x32) -- (c21);
    
    \end{tikzpicture}
    \caption{The graph $G(S)$ for $S= (X_1\vee X_2\vee \overline{X}_3)\wedge(\overline{X}_1\vee X_2\vee \overline{X}_3)$. The solid edges between gadgets are those that depend on whether a variable occurs as a positive or negative literal in a respective clause}
    \label{fig:ClauseAndVariables}
\end{figure}%
Now consider an arbitrary instance $S$. Every clause $C_k$ in which $X_i$ occurs yields additional $c_j^a$-$x_i^b$ paths.
Namely, the two independent paths $(c_j^b,c_k^1,x_i^1)$ and $(c_j^b,c_k^2,x_i^1)$ if $X_i$ is a positive literal in $C_k$.
If $X_i$ is a negative literal in $C_k$, then $x_i^1$ is not adjacent to $c_k^2$, but we still find the path $(c_j^1,c_k^1,x_i^1)$.
There are not more independent paths, because without using the vertices $c_k^a$, $k\neq j$, that are adjacent to $x_i^1$, we are left with the path count in \cref{eq:x-c-singleclause}. 
Analogous arguments hold for $x_i^2$.
Denoting by $\alpha_i$ the number of clauses in which $X_i$ occurs as a positive literal, and by $\beta_i$ the number of clauses where it occurs as a negative literal, we summarize
\begin{equation}\label{x-c-multipleclause}
    \kappa(c_j^a,x_i^b)=\begin{cases}
        3+2\alpha_i+\beta_i &\text{if } (a,b)=(2,1) \text{ and } X_i \text{ is a positive literal in } C_j,\\
        2+2\alpha_i+\beta_i &\text{if } (a,b)\neq(2,1) \text{ and } X_i \text{ is a positive literal in } C_j,\\
        3+\alpha_i+2\beta_i &\text{if } (a,b)=(1,2) \text{ and } X_i \text{ is a negative literal in } C_j,\\
        2+\alpha_i+2\beta_i &\text{if } (a,b)\neq(1,2) \text{ and } X_i \text{ is a negative literal in } C_j.
    \end{cases}
\end{equation}

We are now equipped to investigate the connectivity dimension of $G(S)=(V(S),E(S))$.

\begin{lemma}\label{clauseexactly2}
    Let $C_j$ be an arbitrary clause in $S$. 
    Then any connectivity basis of $G(S)$ contains exactly two of the vertices $c_j^3$, $c_j^4$ and $c_j^5$.
\end{lemma}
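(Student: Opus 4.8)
The plan is to exploit the symmetry of the clause gadget together with the twin principle established earlier. First I would observe that $c_j^3,c_j^4,c_j^5$ are pairwise twins in $G(S)$: inside $G(C_j)$ each of the three is adjacent precisely to $c_j^1,c_j^2,c_j^6$ and to the other two, and none of them is incident to any inter-gadget edge, since by construction all edges joining different gadgets touch only the vertices $c_j^1,c_j^2$ (and the corresponding $x_i^1,x_i^2$). Hence deleting any two of the three leaves the remaining vertex with the same neighborhood as each deleted one, which is exactly the twin condition. By the twin observation recorded before \cref{thm:cdimthreshold}, no landmark outside $\{c_j^3,c_j^4,c_j^5\}$ can distinguish two of them; so if a resolving set omitted two of the three, those two would share a connectivity representation and the set would fail to be resolving. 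This already forces every connectivity basis to contain at least two of $c_j^3,c_j^4,c_j^5$.

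For the matching upper bound I would show that no basis can contain all three. Assume $B$ is a basis with $\{c_j^3,c_j^4,c_j^5\}\subseteq B$ and set $B'\coloneqq B\setminus\{c_j^5\}$; the goal is to prove that $B'$ is still resolving, contradicting the minimality of $B$. The twin property yields $\kappa(v,c_j^3)=\kappa(v,c_j^4)=\kappa(v,c_j^5)$ for every $v\notin\{c_j^3,c_j^4,c_j^5\}$, so as a landmark $c_j^5$ distinguishes exactly the same external pairs as $c_j^3$. Since $c_j^3\in B'$, every pair that does not involve $c_j^5$ and was resolved by $B$ is still resolved by $B'$. Thus the only thing left to check is that $c_j^5$ itself can be separated from every other vertex using $B'$.

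Here I would invoke the connectivity values from \cref{eq:sameclagadget} and \cref{eq:diffgadget}. We have $\kappa(c_j^5,c_j^3)=5$, while $\kappa(c_j^1,c_j^3)=\kappa(c_j^2,c_j^3)=4$, $\kappa(c_j^6,c_j^3)=3$, and $\kappa(w,c_j^3)=2$ for every $w\notin V(C_j)$. Consequently $c_j^4$ and $c_j^5$ are the only vertices whose connectivity to $c_j^3$ equals $5$, so the landmark $c_j^3\in B'$ separates $c_j^5$ from every vertex except $c_j^4$; and the landmark $c_j^4\in B'$ separates $c_j^5$ from $c_j^4$ through its infinite self-entry. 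Hence $B'$ resolves $G(S)$, the desired contradiction, and every basis contains exactly two of $c_j^3,c_j^4,c_j^5$.

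The main obstacle is the upper bound: one must verify that discarding one redundant twin does not destroy resolvability, and this rests precisely on the observation that the connectivity value $5$ to $c_j^3$ is realized only within the twin class $\{c_j^3,c_j^4,c_j^5\}$. The lower bound, by contrast, is a routine application of the twin principle and requires only the neighborhood bookkeeping of the clause gadget.
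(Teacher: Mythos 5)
Your proposal is correct and follows essentially the same route as the paper: the lower bound via the twin property of $\{c_j^3,c_j^4,c_j^5\}$, and the upper bound by discarding one twin from a putative basis containing all three, noting that it is redundant as a landmark and remains identified because the connectivity value $5$ to a remaining twin (by Formulas (\ref{eq:sameclagadget}) and (\ref{eq:diffgadget})) occurs only within the twin class. Your write-up merely makes explicit the bookkeeping (which pairs could be harmed by the deletion) that the paper compresses into two sentences.
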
    

\begin{proof}
    Because the elements of $\{c_j^3,c_j^4,c_j^5\}\eqqcolon A$ are pairwise twins, a resolving set of $G(S)$ has to contain \emph{at least} two of them, say $a$ and $b$.
    The third vertex $c$ of $A$ does not help distinguishing any other vertex.
    Using  Formulas (\ref{eq:sameclagadget}) and (\ref{eq:diffgadget}), we find that $\kappa(c,a)=5\neq\kappa(v,a)$ for all $v\notin A$.
    Therefore, $c$ itself is distinguished from any other vertex.
    Hence, a connectivity basis contains \emph{at most} two elements of $A$.
\end{proof}

\begin{lemma}\label{variableatleast1}
    Let $X_i$ be an arbitrary variable in $S$. 
    Then any connectivity basis of $G(S)$ contains at least one of the vertices $x_i^4$ and $x_i^5$.
\end{lemma}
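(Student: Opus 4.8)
The plan is to show that $x_i^4$ and $x_i^5$ form a pair of \emph{twins} in $G(S)$ and then invoke the fact, established just before the definition of threshold graphs, that every connectivity basis must contain at least one vertex of any pair of twins. This reduces the entire statement to a verification about the neighborhoods of these two vertices, completely bypassing any direct computation with the local connectivities from \cref{x-c-multipleclause}.

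First I would record what happens inside the gadget $G(X_i)$ in isolation. Reading off the edges from \cref{fig:gadgets}, the vertex $x_i^4$ is adjacent precisely to $x_i^1$, $x_i^2$, $x_i^3$, and $x_i^5$ is adjacent to exactly the same three vertices; crucially, $x_i^4$ and $x_i^5$ are not adjacent to one another. Hence, within the standalone variable gadget they already have identical neighborhoods $\{x_i^1,x_i^2,x_i^3\}$, which is exactly the condition required to be twins in that subgraph.

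The only thing that could spoil this after assembling $G(S)$ is an inter-gadget edge incident to $x_i^4$ or $x_i^5$, so the step requiring care is to go through the construction of $G(S)$ and confirm that no such edge is ever added. Every edge joining two distinct gadgets uses only the two ``port'' vertices of each gadget: for a variable gadget these are $x_i^1$ and $x_i^2$, since each edge added for a positive or a negative occurrence of $X_i$ in a clause touches only $x_i^1$ or $x_i^2$; for a clause gadget these are $c_j^1$ and $c_j^2$. As neither $x_i^4$ nor $x_i^5$ is a port vertex, their adjacencies in $G(S)$ coincide with those inside $G(X_i)$, so that $N(x_i^4)=N(x_i^5)=\{x_i^1,x_i^2,x_i^3\}$ holds in the full graph as well.

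Consequently $x_i^4$ and $x_i^5$ are twins in $G(S)$, and the twin property forces every connectivity basis to contain at least one of them, which is the claim. I expect no genuine obstacle beyond the careful bookkeeping of which inter-gadget edges are introduced by the construction; the argument is otherwise a direct application of the twin observation and exactly parallels the twin-based lower bound used in \cref{clauseexactly2}.
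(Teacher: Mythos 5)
Your proof is correct and takes exactly the same approach as the paper, whose entire proof of \cref{variableatleast1} is the one-line observation that $x_i^4$ and $x_i^5$ are twins, relying on the twin fact established before the definition of threshold graphs. Your verification that $N(x_i^4)=N(x_i^5)=\{x_i^1,x_i^2,x_i^3\}$ persists in $G(S)$ because inter-gadget edges only touch the port vertices $x_i^1,x_i^2,c_j^1,c_j^2$ merely spells out the bookkeeping the paper leaves implicit.
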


\begin{proof}
    This is because $x_i^4$ and $x_i^5$ are twins.
\end{proof}

\begin{lemma}\label{variableatleast2}
    Let $X_i$ be an arbitrary variable in $S$. 
    Then any connectivity basis of $G(S)$ contains at least two vertices from $V(X_i)$. 
\end{lemma}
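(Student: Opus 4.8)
The plan is to argue by contradiction, exploiting that the three vertices $x_i^3,x_i^4,x_i^5$ are completely indistinguishable from outside the gadget and only barely distinguishable from inside. Suppose some connectivity basis $B$ of $G(S)$ meets $V(X_i)$ in exactly one vertex. By \cref{variableatleast1}, at least one of the twins $x_i^4,x_i^5$ lies in $B$, so this single vertex must be either $x_i^4$ or $x_i^5$.

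First I would record what landmarks outside the gadget can achieve. By Formula~\eqref{eq:diffgadget}, every vertex $w\notin V(X_i)$ satisfies $\kappa(x_i^a,w)=2$ for all $a\ge 3$. Hence each external landmark assigns the value $2$ to each of $x_i^3,x_i^4,x_i^5$, so no vertex outside $V(X_i)$ can distinguish any pair among these three. Consequently the entire burden of separating $x_i^3,x_i^4,x_i^5$ falls on the unique landmark inside the gadget.

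Next I would read off the relevant internal connectivities from Formula~\eqref{eq:samevargadget}: the pairs $(3,4)$, $(3,5)$, and $(4,5)$ all fall into the ``otherwise'' case, so $\kappa(x_i^3,x_i^4)=\kappa(x_i^3,x_i^5)=\kappa(x_i^4,x_i^5)=3$. Now I split into the two remaining cases. If the lone internal landmark is $x_i^4$, then it gives $x_i^3$ and $x_i^5$ the common value $3$, so these two non-landmark vertices receive identical connectivity representations; if it is $x_i^5$, the same happens to $x_i^3$ and $x_i^4$. In either case a pair of distinct vertices outside $B$ shares its representation, contradicting that $B$ is resolving. Therefore every connectivity basis contains at least two vertices of $V(X_i)$.

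I expect the only delicate point to be the verification that the three candidate vertices $x_i^3,x_i^4,x_i^5$ are genuinely unresolvable from outside: this hinges on the exact equalities (rather than mere inequalities) in Formula~\eqref{eq:diffgadget} for indices $a\ge 3$, together with the fact that the single admissible internal landmark, $x_i^4$ or $x_i^5$, is joined to the other two of the three by the same value $3$. Once these equalities are in place, the contradiction is immediate and no further case analysis is needed.
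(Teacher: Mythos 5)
Your proof is correct and takes essentially the same route as the paper's: both use \cref{variableatleast1} to force the lone internal landmark to be $x_i^4$ or $x_i^5$, then combine the exact equalities $\kappa(x_i^a,w)=2$ for $a\geq3$, $w\notin V(X_i)$, from Formula~(\ref{eq:diffgadget}) with $\kappa(x_i^3,x_i^4)=\kappa(x_i^3,x_i^5)=\kappa(x_i^4,x_i^5)=3$ from Formula~(\ref{eq:samevargadget}) to conclude that two of $x_i^3,x_i^4,x_i^5$ receive identical representations. The only difference is cosmetic: the paper fixes $x_i^5\in B$ by the symmetry of the twins, whereas you treat both cases explicitly.
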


\begin{proof}
    Consider a connectivity basis $B$ of $G(S)$ and a variable $X_i$ from $S$. 
    By \cref{variableatleast1}, we know that $B$ has to contain $x_i^4$ or $x_i^5$, say $x_i^5$. 
    For a contradiction, assume that $B$ contains no other vertex of $V(X_i)$.
    Let $w\in B\setminus \{x_i^5\}$. Using Formula (\ref{eq:diffgadget}), we find that $\kappa(x_i^3,w)=\kappa(x_i^4,w)=2$, as $w\notin V(X_i)$. Also, by Formula (\ref{eq:samevargadget}), $\kappa(x_i^3,x_i^5)=\kappa(x_i^4,x_i^5)=3$. So indeed, $\kappa(x_i^3,w)=\kappa(x_i^4,w)$ for all $w\in B$. 
    In other words, $B$ is not resolving.
\end{proof}

\begin{corollary}\label{cdimatleast2mn}
    The connectivity dimension of $G(S)$ is at least $2(m+n)$.    
\end{corollary}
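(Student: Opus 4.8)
The plan is to derive the bound by a direct counting argument that sums the contributions of the individual gadgets, using crucially that these gadgets are pairwise vertex-disjoint. I would fix an arbitrary connectivity basis $B$ of $G(S)$, so that $|B|=\cdim(G(S))$, and bound $|B|$ from below by counting how many of its elements must lie in each gadget.

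First I would record the disjointness that makes the counting valid. By construction, the vertex sets $V(X_1),\ldots,V(X_n)$ and $V(C_1),\ldots,V(C_m)$ are pairwise disjoint: each gadget enters $G(S)$ as an isolated component, and the subsequent construction only adds edges between gadgets, never identifying vertices. Hence the intersections $B\cap V(X_i)$ and $B\cap V(C_j)$ are pairwise disjoint subsets of $B$, and their sizes may be added without double-counting.

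Next I would invoke the two structural lemmas already established. By \cref{variableatleast2}, every variable gadget contributes at least two landmarks, so $|B\cap V(X_i)|\geq 2$ for each $i\in\{1,\ldots,n\}$. By \cref{clauseexactly2}, each clause gadget contains exactly two of the vertices $c_j^3,c_j^4,c_j^5$, and in particular $|B\cap V(C_j)|\geq 2$ for each $j\in\{1,\ldots,m\}$. Combining these lower bounds with the disjointness observation gives
\begin{equation*}
    \cdim(G(S))=|B|\geq\sum_{i=1}^n|B\cap V(X_i)|+\sum_{j=1}^m|B\cap V(C_j)|\geq 2n+2m=2(m+n).
\end{equation*}

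I do not expect any genuine obstacle in this corollary, as it is essentially a bookkeeping consequence of the preceding lemmas. The only point that warrants care is the disjointness of the gadgets, since it is precisely this property that allows the per-gadget bounds of \cref{variableatleast2} and \cref{clauseexactly2} to be summed with no overlap; everything else is immediate.
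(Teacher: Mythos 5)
Your proof is correct and follows exactly the route the paper intends: the corollary is stated without proof precisely because it is the immediate summation of the per-gadget bounds from \cref{clauseexactly2} and \cref{variableatleast2} over the pairwise vertex-disjoint gadgets. Your explicit attention to the disjointness of the gadget vertex sets is a sound (if routine) piece of bookkeeping that the paper leaves implicit.
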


\begin{lemma}\label{cdimequal2mn}
    If $S$ is satisfiable, then $\cdim(G(S))=2(m+n)$.
\end{lemma}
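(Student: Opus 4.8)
The lower bound $\cdim(G(S))\ge 2(m+n)$ is already supplied by \cref{cdimatleast2mn}, so the plan is to exhibit, from a satisfying assignment $\phi$, a resolving set $B$ of exactly this size. I would let $B$ contain, for every clause $C_j$, the two vertices $c_j^3$ and $c_j^4$, and for every variable $X_i$ the vertex $x_i^4$ together with $x_i^1$ if $\phi(X_i)$ is true and $x_i^2$ if $\phi(X_i)$ is false. This set has $2m+2n$ elements and meets the lower bounds of \cref{clauseexactly2,variableatleast2} with equality, so it only remains to verify that $B$ is resolving.

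I would organize the verification by separating cross-gadget from within-gadget distinguishing. First I would argue that the ``core'' landmarks $x_i^4$, $c_j^3$, $c_j^4$ already reveal which gadget any non-landmark vertex lives in. Indeed, by \eqref{eq:samevargadget} and \eqref{eq:sameclagadget} every non-landmark vertex $u$ in the gadget of such a core landmark $w$ satisfies $\kappa(u,w)\ge 3$, whereas \eqref{eq:diffgadget} gives $\kappa(u,w)=2$ whenever $u$ lies outside that gadget. The latter relies only on the core landmarks having index $\ge 3$, for which \eqref{eq:diffgadget} forces the external connectivity to be exactly two, and this holds against \emph{every} outside vertex $u$, including the index-$1$ and index-$2$ vertices. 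Hence any two non-landmark vertices in different gadgets are separated by some core landmark, and it suffices to resolve vertices within each single gadget.

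For the within-gadget step I would treat variable and clause gadgets in turn. Within a variable gadget $X_i$ the three non-landmark vertices are resolved already by their connectivity to the chosen $x_i^1$ or $x_i^2$: a short computation with \eqref{eq:samevargadget} shows this coordinate takes three distinct values (one exceeding $4$, one equal to $4$, one equal to $3$). Within a clause gadget $C_j$, \eqref{eq:sameclagadget} gives $c_j^5$ the representation $[5,5]$ and $c_j^6$ the representation $[3,3]$ on the landmarks $c_j^3,c_j^4$, both different from the common value $[4,4]$ of $c_j^1$ and $c_j^2$. The single remaining pair is $(c_j^1,c_j^2)$, which no core or internal landmark can separate. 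This is exactly where satisfiability enters: since $\phi$ satisfies $C_j$, some literal of $C_j$ is true, that is, either a variable $X_i$ occurring positively with $\phi(X_i)$ true (so $x_i^1\in B$) or one occurring negatively with $\phi(X_i)$ false (so $x_i^2\in B$); by \eqref{x-c-multipleclause} that landmark assigns $c_j^1$ and $c_j^2$ connectivities differing by exactly one, and thus distinguishes them. Therefore $B$ is resolving and $\cdim(G(S))=2(m+n)$.

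The main obstacle I anticipate is keeping the two distinguishing mechanisms cleanly decoupled. The delicate point is justifying that the index-$\ge 3$ core landmarks really have external connectivity exactly two against all other vertices, so that gadget membership is detected unambiguously even though the index-$1$ and index-$2$ vertices have external connectivities only bounded below by two in \eqref{eq:diffgadget}. Once that is in place, the heart of the reduction is the pair $(c_j^1,c_j^2)$, and carefully reading off from \eqref{x-c-multipleclause} that a satisfying literal produces the off-by-one connectivity is what ties the construction to satisfiability.
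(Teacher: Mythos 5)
Your proposal is correct and follows essentially the same route as the paper: the same resolving set up to swapping twin vertices (the paper takes $c_j^4,c_j^5$ and $x_i^5$ where you take $c_j^3,c_j^4$ and $x_i^4$, which are interchangeable twins by \eqref{eq:sameclagadget} and \eqref{eq:samevargadget}), the same cross-gadget separation via a degree-$\geq3$-index landmark whose external connectivity is exactly $2$ by \eqref{eq:diffgadget}, and the same final step where only the pair $(c_j^1,c_j^2)$ survives and is split by the off-by-one connectivity in \eqref{x-c-multipleclause} coming from a satisfied literal. The point you flagged as delicate is exactly what \eqref{eq:diffgadget} asserts, so no gap remains.
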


\begin{proof}
    By \cref{cdimatleast2mn}, we know that $\cdim(G(S))\geq2(m+n)$.
    Fix a satisfying assignment for $S$. 
    For each clause $C_j$ let $c_j^4,c_j^5\in B$. 
    For each variable $X_i$ let $x_i^5\in B$.
    For each variable $X_i$ with assigned value \texttt{true} let $x_i^1\in B$.
    For each variable $X_i$ with assigned value \texttt{false} let $x_i^2\in B$.
    We show that $B$ is resolving for $G(S)$ and therefore $\cdim(G(S))\leq2(m+n)$. For this, we find for all vertices $v_1,v_2\in V(S)\setminus B$, $v_1\neq v_2$, a vertex $w\in B$ that distinguishes $v_1$ and $v_2$.
    
    Let $v_1\in V(C_j)$ and $v_2\in V(C_k)$ for some clauses $C_j$ and $C_k$, $k\neq j$.
    Using $c_j^5\in B$ and Formulas (\ref{eq:sameclagadget}) and (\ref{eq:diffgadget}), we find that $\kappa(u,c_j^5)\geq3 \neq 2=\kappa(v,c_j^5)$.

    Let $v_1\in V(X_i)$ and $v_2\in V(X_k)$ for some variables $X_i$ and $X_k$, $i\neq k$.
    Using $x_i^5\in B$ and Formulas (\ref{eq:samevargadget}) and (\ref{eq:diffgadget}), we find that $\kappa(u,x_i^5)=3\neq2=\kappa(v,x_i^5)$.

    Let $v_1\in V(X_i)$ for some variable $X_i$ and $v_2\in V(C_j)$ for some clause $C_j$. 
    Using $x_i^5\in B$ and Fomulas (\ref{eq:samevargadget}) and (\ref{eq:diffgadget}), we find that $\kappa(v_1,x_i^5)=3\neq2=\kappa(v_2,x_i^5)$.
    
    Remaining are the cases where $v_1$ and $v_2$ are in the same gadget. 
    Let $v_1,v_2\in V(X_i)$ for some variable $X_i$. 
    We have $x_i^5\in B$ and $x_i^1\in B$ or $x_i^2\in B$. 
    According to Formula (\ref{eq:samevargadget}), we have $\kappa(x_i^1,x_i^2)>4$, $\kappa(x_i^3,x_i^1)=\kappa(x_i^3,x_i^2)=4$ and $\kappa(x_i^4,x_i^1)=\kappa(x_i^4,x_i^2)=3$. 
    So either way, $v_1$ and $v_2$ are distinguished.
    
    Let $v_1,v_2\in V(C_j)$ for some clause $C_j$. 
    According to Formula (\ref{eq:sameclagadget}), we have $c_j^4\in B$ and $\kappa(c_j^1,c_j^4)=\kappa(c_j^2,c_j^4)=4$, $\kappa(c_j^3,c_j^4)=5$, $\kappa(c_j^6,c_j^4)=3$.
    The only pair of vertices $\{v_1,v_2\}\subset V(C_j)\setminus B$ that is not distinguished by $c_j^4$ is $\{c_j^1,c_j^2\}$. 
    Because we are given a satisfying assignment for $S$, in $C_j$ there is at least one positive literal set to \texttt{true} or a negative literal set to \texttt{false}.
    
    Let $X_i$ occur as positive literal in $C_j$ with assigned value \texttt{true}. 
    So, by construction, $x_i^1\in B$.
    By \cref{x-c-multipleclause}, we have $\kappa(c_j^1,x_i^1)=2+2\alpha_i+\beta_i\neq3+2\alpha_i+\beta_i=\kappa(c_j^2,x_i^1)$.

    Let $X_i$ occur as negative literal in $C_j$ with assigned value \texttt{false}. 
    So, by construction $x_i^2\in B$. 
    By \cref{x-c-multipleclause}, we have $\kappa(c_j^1,x_i^2)=3+\alpha_i+2\beta_i\neq2+\alpha_i+2\beta_i=\kappa(c_j^2,x_i^2)$.
\end{proof}

\begin{lemma}\label{cdimifsatisfiable}
    If $\cdim(G(S))=2(m+n)$, then $S$ is satisfiable.
\end{lemma}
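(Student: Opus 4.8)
The plan is to prove the statement in constructive form: from a connectivity basis $B$ of $G(S)$ of size exactly $2(m+n)$, I would read off a satisfying truth assignment for $S$.

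First I would pin down the structure of such a minimum basis. By \cref{cdimatleast2mn} we know $|B|\geq 2(m+n)$, and that bound is obtained by summing the pairwise disjoint contributions of \cref{clauseexactly2} (at least two of $c_j^3,c_j^4,c_j^5$ per clause) and \cref{variableatleast2} (at least two vertices of $V(X_i)$ per variable). Hence $|B|=2(m+n)$ forces equality in every one of these local bounds: for each clause $B$ contains \emph{exactly} two of $c_j^3,c_j^4,c_j^5$ and no further vertex of $V(C_j)$, and for each variable exactly two vertices of $V(X_i)$, with nothing left over. In particular $c_j^1,c_j^2\notin B$ for every clause, and since $B\cap V(X_i)$ must already contain one of the twins $x_i^4,x_i^5$ by \cref{variableatleast1}, it can contain at most one of $x_i^1,x_i^2$.

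This last observation lets me define a truth assignment: set $X_i$ to \texttt{true} if $x_i^1\in B$, to \texttt{false} if $x_i^2\in B$, and arbitrarily otherwise. It is well defined precisely because $x_i^1$ and $x_i^2$ are never both in $B$. To show it satisfies $S$, I fix a clause $C_j$ and exploit that $c_j^1,c_j^2\notin B$, so some landmark $w\in B$ must distinguish this pair. The heart of the argument is to show that the only landmarks capable of doing so are $x_i^1$ with $X_i$ a positive literal of $C_j$, or $x_i^2$ with $X_i$ a negative literal of $C_j$; either of these lying in $B$ makes the corresponding literal of $C_j$ true under my assignment, so $C_j$ is satisfied, and ranging over all $j$ yields satisfiability of $S$. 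For the classification of distinguishing landmarks I would exhaust the types of $w$: the within-gadget landmarks $c_j^3,c_j^4,c_j^5$ give $\kappa(c_j^1,w)=4=\kappa(c_j^2,w)$ by \cref{eq:sameclagadget}; any inner landmark $c_k^b$ or $x_i^b$ with $b\geq 3$, including landmarks of other clause gadgets, gives connectivity exactly $2$ to both $c_j^1$ and $c_j^2$ by \cref{eq:diffgadget}; and a landmark $x_i^1$ or $x_i^2$ with $X_i$ occurring in $C_j$ but with the ``wrong'' sign gives equal values by \cref{x-c-multipleclause}.

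The main obstacle is the remaining case, a landmark $x_i^1$ or $x_i^2$ with $X_i$ \emph{not} occurring in $C_j$, for which \cref{x-c-multipleclause} says nothing directly. Here I would argue by symmetry of the path structure: inside $G(C_j)$ the vertices $c_j^1$ and $c_j^2$ are twins, both adjacent exactly to $c_j^3,c_j^4,c_j^5$; the clause-clause edges join $\{c_j^1,c_j^2\}$ completely and symmetrically to each $\{c_k^1,c_k^2\}$; and the only edges breaking the $c_j^1\leftrightarrow c_j^2$ symmetry are the direct edges to variable gadgets of variables that actually occur in $C_j$. Since every path from $x_i^1$ into $G(C_j)$ must enter through $c_j^1$ or $c_j^2$, and every route reaching $X_i$ departs through the symmetric clause-clause connections, a Menger/max-flow count gives $\kappa(c_j^1,x_i^1)=\kappa(c_j^2,x_i^1)$ whenever $X_i\notin C_j$, and likewise for $x_i^2$. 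Making this path-symmetry argument fully rigorous, matching up internally disjoint $c_j^1$-$x_i^1$ and $c_j^2$-$x_i^1$ path systems despite the asymmetric edges attached to the other literals of $C_j$, is the delicate part, and it is exactly what guarantees that a distinguished pair $c_j^1,c_j^2$ certifies a genuinely satisfied literal rather than an artifact of the encoding.
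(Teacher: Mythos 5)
Your proposal is correct and follows the same skeleton as the paper's proof: equality in the disjoint per-gadget lower bounds forces exactly two landmarks per gadget with $c_j^1,c_j^2\notin B$, so each pair $c_j^1,c_j^2$ must be separated by some landmark, and a case analysis shows that only a right-sign boundary vertex ($x_i^1$ for a positive occurrence, $x_i^2$ for a negative one) can do this, from which the satisfying assignment is read off. You diverge from the paper in two local places, both defensibly. First, where the second landmark of a variable gadget is $x_i^3$ or $x_i^4$, the paper normalizes the basis via an exchange argument (replacing that landmark by $x_i^1$ or $x_i^2$ and checking the new set is still resolving of the same size), whereas you simply assign such variables an arbitrary truth value; this is sound because interior landmarks have $\kappa=2$ to both $c_j^1$ and $c_j^2$ by \cref{eq:diffgadget} and hence can never be the separating witness, and it spares you verifying that the exchanged set remains resolving. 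Second, you explicitly treat boundary landmarks $x_i^1,x_i^2$ of variables \emph{not} occurring in $C_j$, a case that \cref{x-c-multipleclause} does not cover and that the paper's proof passes over in silence; your claim $\kappa(c_j^1,x_i^b)=\kappa(c_j^2,x_i^b)$ is true, and your plan works: every $c_j^a$--$x_i^b$ path must pass through the external neighborhood $N$ of $\{x_i^1,x_i^2\}$, which consists of vertices $c_k^1,c_k^2$ with $k\neq j$, and since $c_j^1$ and $c_j^2$ are both adjacent to all of $N$ via the clause--clause edges, each path of an independent $c_j^1$-$x_i^b$ system can be rerouted at its last vertex in $N$ to start from $c_j^2$ instead (truncating the at most one path that passes through $c_j^2$), giving the stated equality. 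So your write-up is, if anything, more self-contained than the paper's at that point; the only remaining work is turning the rerouting sketch into a formal Menger-type argument, which is routine.
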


\begin{proof}
    Let $B$ of size $|B|=2(m+n)$ be a resolving set for $G(S)$. 
    Due to \cref{clauseexactly2,variableatleast2}, we know that $B$ contains exactly two elements from each gadget. 
    Due to \cref{clauseexactly2,variableatleast1}, we can assume that for each clause $C_j$ we have $\{c_j^4,c_j^5\}\subseteq B$ and that for each variable $X_i$ we have $x_i^5\in B$.
    Note that by Formulas (\ref{eq:sameclagadget}) and (\ref{eq:diffgadget}), the landmarks $c_k^4,c_k^5$ and $x_k^5$ can not distinguish $c_j^1$ from $c_j^2$.
    
    There is one more landmark in $V(X_i)$ for all variables $X_i$.
    Using Formulas (\ref{eq:samevargadget}) and (\ref{eq:diffgadget}), one checks that, given $x_i^5\in B$, whenever there is $x_i^3\in B$ or $x_i^4\in B$, then $B'=B\setminus\{x_i^3,x_i^4\}\cup\{x_i^a\}$ is a resolving set of size $|B'|=2(m+n)$ for both $a=1$ and $a=2$.
    We may therefore assume that either $x_i^1\in B$ or $x_i^2\in B$.
    
    Let $X_i$ occur as positive literal in $C_j$. 
    By \cref{x-c-multipleclause}, we have that $x_i^1$ distinguishes $c_j^1$ from $c_j^2$ as $\kappa(c_j^1,x_i^1)=2+2\alpha_i+\beta_i\neq3+2\alpha_i+\beta_i=\kappa(c_j^2,x_i^1)$. 
    We also have $\kappa(c_j^1,x_i^2)=\kappa(c_j^2,x_i^2)=2+2\alpha_i+\beta_i$, so $x_i^2$ does not distinguish $c_j^1$ from $c_j^2$.

    Let $X_i$ occur as negative literal in $C_j$. 
    By \cref{x-c-multipleclause}, we have that $x_i^2$ distinguishes $c_j^1$ from $c_j^2$ as $\kappa(c_j^1,x_i^2)=3+\alpha_i+2\beta_i\neq2+\alpha_i+2\beta_i=\kappa(c_j^2,x_i^2)=2$. 
    We also have $\kappa(c_j^1,x_i^1)=\kappa(c_j^2,x_i^1)=2+\alpha_i+2\beta_i$, so $x_i^1$ does not distinguish $c_j^1$ from $c_j^2$.

 It follows that for each clause $C_j$ there has to be a variable $X_i$ occuring as positive literal in $C_j$ with $x_i^1\in B$ or occuring as negative literal in $C_j$ with $x_i^2\in B$. 
    In other words, given a resolving set $B$ of size $2(m+n)$, the assignment that assigns to $X_i$ the value \texttt{true} if $x_i^1\in B$ and \texttt{false} otherwise, satisfies $S$.
\end{proof}

\begin{theorem}
    Deciding whether $\cdim(G)\leq k$, where $G$ is a graph and $k$ an integer, is NP-complete.
\end{theorem}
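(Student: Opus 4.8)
The plan is to establish the two defining properties of NP-completeness separately: membership in NP and NP-hardness, the latter via the 3-SAT reduction already developed above.

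First I would argue that the problem lies in NP. The natural certificate for a yes-instance $(G,k)$ is a vertex set $W\subseteq V(G)$ with $|W|\leq k$. To verify in polynomial time that $W$ is resolving, one computes $\kappa(v,w)$ for every $v\in V(G)$ and every $w\in W$. By Menger's theorem, $\kappa(v,w)$ equals the minimum size of a vertex set separating $v$ and $w$, which is computable in polynomial time by a standard maximum-flow computation on the vertex-split network (splitting each internal vertex into an in-copy and an out-copy joined by a unit-capacity edge). Once all these values are available, one assembles the representations $r(v,W)$ and checks that they are pairwise distinct, which is polynomial in $n$ and $k$. Hence the decision problem is in NP.

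For NP-hardness I would invoke the reduction from 3-SAT constructed in this section. Given an instance $S$ on $n$ variables and $m$ clauses, the graph $G(S)$ has $5n+6m$ vertices and a number of edges that is polynomial in $n+m$, so it is constructible in polynomial time; as observed in the text, we may assume $G(S)$ is connected, since otherwise $S$ splits into variable-disjoint subinstances and \cref{lem:disjointunion} applies. I would then set the threshold to $k=2(m+n)$ and assemble the lemmas to obtain correctness: by \cref{cdimequal2mn}, if $S$ is satisfiable then $\cdim(G(S))=2(m+n)\leq k$; conversely, if $\cdim(G(S))\leq k=2(m+n)$, then since $\cdim(G(S))\geq 2(m+n)$ by \cref{cdimatleast2mn}, equality must hold, and \cref{cdimifsatisfiable} produces a satisfying assignment. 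Thus $\cdim(G(S))\leq 2(m+n)$ if and only if $S$ is satisfiable. Since 3-SAT is NP-complete and the map $S\mapsto (G(S),2(m+n))$ is a polynomial-time many-one reduction, the decision problem is NP-hard. Combining this with membership in NP yields NP-completeness.

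The substantive difficulty has already been absorbed into the gadget analysis and the preceding lemmas, so the theorem itself is essentially an assembly step. The one point I expect to require genuine care, rather than mere bookkeeping, is the NP-membership argument: one must justify that local connectivities, and hence the resolving property, are verifiable in polynomial time, which rests on the polynomial-time computability of $\kappa(v,w)$ through the flow formulation of Menger's theorem. Everything else reduces to checking that the construction size is polynomial and citing the three connectivity-dimension bounds at the correct threshold $k=2(m+n)$.
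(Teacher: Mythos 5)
Your proposal is correct and follows essentially the same route as the paper: NP membership via polynomial-time computation of the local connectivities $\kappa(v,w)$ by maximum-flow methods, and NP-hardness by the 3-SAT reduction $S\mapsto\bigl(G(S),2(m+n)\bigr)$ combining \cref{cdimequal2mn} and \cref{cdimifsatisfiable}. Your explicit use of \cref{cdimatleast2mn} to upgrade $\cdim(G(S))\leq 2(m+n)$ to equality before invoking \cref{cdimifsatisfiable} is a small but welcome piece of bookkeeping that the paper's terser proof leaves implicit.
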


\begin{proof}
    The problem is in NP as verifying whether a given set is resolving reduces to computing the respective local connectivities. 
    Those can be determined by standard maximum flow algorithms efficiently.
    Given a clause $S$ for 3-SAT on $m$ clauses and $n$ variables, deciding whether $\cdim(G(S))\leq 2(m+n)$ solves also 3-SAT by \cref{cdimequal2mn} and \cref{cdimifsatisfiable}.
\end{proof}

\section{Conclusions and outlook}\label{sec:outlook}

This article is an invitation to study a new localization concept, a graph's connectivity dimension.
Beginning with the elementary bound $0\leq \cdim(G)\leq n-1$ on the connectivity dimension of a graph $G$, \cref{sec:prescribed} provides characterizations for the cases where $\cdim(G)\in\{0,1,n-1\}$.
Beyond the result of \cref{thm:cdimthreshold}, stating that for given $k\geq 2$, $k\in\mathbb{N}$, it is possible to construct infinite families of graphs achieving connectivity dimension $k$, a natural further goal could be to find compact characterizations for graphs of certain dimension. Two cases stand out as particularly interesting.

\begin{problem}\label{op:cdim2andn-2}
Give a complete characterization of graphs $G$ with $\cdim(G)=2$ or $\cdim(G)=n-2$.    
\end{problem}

To address this problem, but also for its own sake, it might be instructive to study the connectivity dimension under constraints on a graph's regularity, connectivity, or diameter. 
A solution to this problem could also be valuable to improve the bound from \cref{thm:cdimblocksratio}. In its proof, a reduction of a minimal counterexample is carried out by removing non-trivial blocks having $\cdim\geq2$. 
Knowing more about the structure of blocks of $\cdim=2$ could strengthen the reduction arguments involved.

\cref{sec:blockstructure} addresses relationships between a graph's connectivity dimension and its block structure.
A key result is the bound $\smash{\frac{\cdim(G)}{\blocks(G)}\geq \frac{1}{2}}$, established in \cref{thm:cdimblocksratio}.
Yet, the best concrete construction known to us only achieves a ratio of $\frac{2}{3}$.
Closing this gap remains a natural open problem.

\begin{problem}\label{op:cdimblocksratio}
Find a lower bound on $\smash{\frac{\cdim(G)}{\blocks(G)}}$ larger than than $\frac{1}{2}$ or provide a family of graphs where $\smash{\frac{\cdim(G)}{\blocks(G)}}$ is lower than $\frac{2}{3}$.
\end{problem}

In the proof of \cref{thm:cdimblocksratio}, special care needs to be taken for graphs forcing a $\mathds{1}$ representation.
Particular instances of such graphs are graphs that contain uniformly \mbox{$1$-connected} vertices, where we call a vertex $v$ uniformly $k$-connected if $\kappa(v,w)=k$ for all other vertices $w$.
For example, all leaves of a graph are uniformly \mbox{$1$-connected} and in a cycle graph, all vertices are uniformly \mbox{$2$-connected}.
Besides inherent relevance in the problems discussed here, it might be of interest to study and characterize uniformly connected vertices, or vertices with special connectivity patterns, in general. In particular, if one would be interested in generalizing \cref{lem:disjointunion,lem:cdimHjoin} to a situation where $H$ is $k$-connected, one would have to take special care of uniformly $\ell$-connected vertices for $\ell\leq k$.

\cref{sec:complexity} shows that it is NP-complete to determine a graph's connectivity dimension.
As discussed in the introduction, this problem can also be phrased as a special set cover problem, which admits a logarithmic-factor approximation via the greedy algorithm.
One might ask whether it is possible to design better heuristics tailored to the connectivity dimension.

Another research direction could be the study of dimension concepts that build on different notions of connectivity.
Instead of the local connectivity, the resistance distance or local edge-connectivity likewise give rise to meaningful invariants.

\section*{Acknowledgments}
This work was supported by the Croatian Science Foundation under the project number HRZZ-IP-2024-05-2130.
The second author gratefully acknowledges partial support by the Deutsche Forschungsgemeinschaft (DFG, German Research Foundation) under Germany's Excellence Strategy -- The Berlin Mathematics Research Center MATH+ (EXC-2046/1, project ID: 390685689).

\bibliographystyle{plain}
\bibliography{bibliography}

\begin{thebibliography}{10}

\bibitem{beineke2002average}
Lowell~W. Beineke, Ortrud~R. Oellermann, and Raymond~E. Pippert.
\newblock The average connectivity of a graph.
\newblock {\em Discrete Mathematics}, 252(1-3):31--45, 2002.

\bibitem{chartrand2000resolvability}
Gary Chartrand, Linda Eroh, Mark~A Johnson, and Ortrud~R. Oellermann.
\newblock Resolvability in graphs and the metric dimension of a graph.
\newblock {\em Discrete Applied Mathematics}, 105(1-3):99--113, 2000.

\bibitem{chvatal1977aggregations}
V{\'a}clav Chv{\'a}tal and Peter~L Hammer.
\newblock Aggregations of inequalities.
\newblock {\em Studies in Integer Programming, Annals of Discrete Mathematics}, 1:145--162, 1977.

\bibitem{diestel2017graph}
Reinhard Diestel.
\newblock {\em Graph Theory}.
\newblock Springer, 2017.

\bibitem{hofmann2022uniformly}
Frank Göring, Tobias Hofmann, and Manuel Streicher.
\newblock Uniformly connected graphs.
\newblock {\em Journal of Graph Theory}, 101(2):210--225, 2022.

\bibitem{hamilton2017representation}
William~L Hamilton, Rex Ying, and Jure Leskovec.
\newblock Representation learning on graphs: Methods and applications.
\newblock {\em arXiv:1709.05584}, 2017.

\bibitem{harary1976metric}
Frank Harary and Robert~A. Melter.
\newblock On the metric dimension of a graph.
\newblock {\em Ars Combinatoria}, 2:191--195, 1976.

\bibitem{khuller1996landmarks}
Samir Khuller, Balaji Raghavachari, and Azriel Rosenfeld.
\newblock Landmarks in graphs.
\newblock {\em Discrete Applied Mathematics}, 70(3):217--229, 1996.

\bibitem{lovasz1975ratio}
L{\'a}szl{\'o} Lov{\'a}sz.
\newblock On the ratio of optimal integral and fractional covers.
\newblock {\em Discrete mathematics}, 13(4):383--390, 1975.

\bibitem{slater1975leaves}
Peter~J. Slater.
\newblock Leaves of trees.
\newblock {\em Proceedings of the Sixth Southeastern Conference on Combinatorics, Graph Theory and Computing}, pages 549--559, 1975.

\bibitem{sun2023pp}
Michael Sun.
\newblock {PP-GNN: Pretraining Position-aware Graph Neural Networks with the NP-hard metric dimension problem}.
\newblock {\em Neurocomputing}, 561:126848, 2023.

\bibitem{tillquist2023getting}
Richard~C. Tillquist, Rafael~M. Frongillo, and Manuel~E. Lladser.
\newblock Getting the lay of the land in discrete space: A survey of metric dimension and its applications.
\newblock {\em SIAM Review}, 65(4):919--962, 2023.

\end{thebibliography}

\end{document}